\documentclass[12pt]{article}

\hoffset -1.5 true cm
\voffset -0.8 true cm

 \textheight 200mm
 \textwidth  150mm

\usepackage{latexsym}
\usepackage{amssymb}
\usepackage{amsmath}
\usepackage{amsthm}
\usepackage{amsfonts}
\usepackage{latexsym}
\usepackage{color}
\newtheorem{theorem}{Theorem}[section]

\newtheorem{lemma}[theorem]{Lemma}
\newtheorem{corollary}[theorem]{Corollary}

\newcommand{\Aut}{\hbox{Aut}\, }

\newcommand{\'}{\' \i}

 \def\s{\sigma}

\def\di{\bigm|} \def\lg{\langle} \def\rg{\rangle}
\def\nd{\mathrel{\bigm|\kern-.7em/}}

\def\f{\noindent}

\def\Aut{\hbox{\rm Aut}}

\def\mod{\hbox{\rm mod }}

\begin{document}
\begin{center}
{\Large $2$-Groups that factorise as products of cyclic groups,\\
and regular embeddings of complete bipartite graphs}
\end{center}

\vskip 5mm {\small
\begin{center}
{\sc Shaofei Du} \\ {\small School of Mathematical Sciences, Capital
Normal University,}\\ {\small Beijing 100048, China}\\

\vskip 3mm {\sc Gareth Jones}\\
 {\small School of  Mathematics, University of Southampton,}
\\ {\small Southampton S017 1BJ, United Kingdom}\\

\vskip 3mm {\sc Jin Ho Kwak} \\
{\small Department of Mathematics,} \\ {\small Pohang University
of Science and Technology,} \\ {\small Pohang 790-784,
Korea}\\

\vskip 3mm {\sc Roman Nedela}\\
 {\small Institute of Mathematics, Slovak Academy of Science,} \\
 {\small Severn\' a 5, 975 49 Bansk\' a Bystrica, Slovakia}\\

\vskip 3mm {\sc Martin \v Skoviera}\\
 {\small Department of Computer Science,
 Comenius University,} \\
{842 48 Bratislava, Slovakia}
\end{center}}

\begin{abstract}

We classify those 2-groups $G$ which factorise as a product of two disjoint cyclic subgroups $A$ and $B$, transposed by an automorphism of order $2$. The case where $G$ is metacyclic having been dealt with elsewhere,
we show that for each $e\ge 3$ there are exactly three such non-metacyclic groups $G$ with $|A|=|B|=2^e$, and for $e=2$ there is one.
These groups appear in a classification by Berkovich and Janko of $2$-groups with one non-metacyclic maximal subgroup; we enumerate these groups, give simpler presentations for them, and determine their automorphism groups.

\end{abstract}



\newpage

\section{Introduction}\label{sec:intro}

Groups that factorise as products of isomorphic cyclic groups
have been studied for over fifty years
\cite{D,Huppap,Ito1,Ito2, IO}. In several recent papers
\cite{DJKNS1,DJKNS2,J,JNS2,JNS1,KK1,KK2,NSZ} these groups have
emerged as an important tool for the classification of regular
embeddings of complete bipartite graphs in orientable
surfaces. They also arise naturally in the theory of finite $p$-groups, for example in the recent classification by Berkovich and Janko~\cite[Chapter 87]{BJ} of
$2$-groups with a unique non-metacyclic maximal subgroup. Our aim in this paper is to demonstrate some connections between these two problems by showing that a certain class of non-metacyclic $2$-groups play an important role in both situations. As a consequence, we are able to give more information and simpler presentations for some of the groups described by Berkovich and Janko.

As shown in \cite{JNS1}, the problem of
classifying orientably regular embeddings of complete
bipartite graphs $K_{n,n}$ is closely related to that of determining those
groups $G$ that factorise as a product $ AB$ of two cyclic
groups $A=\lg a\rg$ and $B=\lg b\rg$ of order $n$ such that
$A\cap B=1$ and there is an automorphism of $G$ transposing
the generators $a$ and~$b$. Such groups are called
\textit{isobicyclic}, or \textit{$n$-isobicyclic} if we wish to
specify the value of $n$ (see \cite{JNS1}). We will call $(G,a,b)$ an \textit{isobicyclic triple}, and $a, b$ an \textit{isobicyclic pair\/} for $G$.

A result of It\^o~\cite{Ito1} shows that an isobicyclic group $G$, as a product of two abelian groups, must be metabelian. In particular it is solvable, so it satisfies Hall's Theorems, which extend Sylow's Theorems from single primes to sets of primes. This fact, together with results of Wielandt~\cite{Wie} on products of nilpotent groups, allows one to reduce the classification of $n$-isobicyclic groups to the case where $n$ is a prime power (see~\cite{J} for full details).

When $n$ is an odd prime power, a result of Huppert
\cite{Huppap} implies that $G$ must be metacyclic.
When $n$ is a power of $2$, however, Huppert's result does not
apply, and indeed for each $n=2^e\geq 4$ there are non-metacyclic $n$-isobicyclic groups. In this paper we will study all
$n$-isobicyclic groups where $n=2^e$, our main goal being to give a complete description of the corresponding isobicyclic triples $(G,a,b)$. 

In order to state our main result, let us define
\begin{equation}
G_1(e,f)=\lg h, g\di h^{2^e}=g^{2^e}=1,\, h^g=h^{1+2^f}\rg 
\end{equation}
\f where $f=2, \dots, e$, and
\begin{equation}
\begin{array}{ll}
G_2(e;k,l)=\lg  a, b\di&  a^n=b^n=[b^2, a^2]=1,\,
[b,a]=a^2b^{-2}(a^{n/2}b^{n/2})^k,\\
& \hskip-1.5mm (b^2)^a=b^{-2}(a^{n/2}b^{n/2})^l,\,
(a^2)^b=a^{-2}(a^{n/2}b^{n/2})^l\rg \end{array}
\end{equation}
where $n=2^e\ge 4$
and $k,l\in\{0,1\}$, with $k=l=0$ when $n=4$. In fact, it is easily seen that this last group $G_2(2;0,0)$ has a simplified presentation
\begin{equation}
G_2(2;0,0)=\lg a, b\di a^4=b^4=[a^2, b]=[b^2, a]=1,\, [b,a]=a^2b^2 \rg.
\end{equation}

Our main result shows that if $n=2^e$ then every $n$-isobicyclic group has one of the above two forms.

\begin{theorem}\label{main1}
Let $G$ be an $n$-isobicyclic group where $n=2^e\ge 4$. Then
either
\begin{enumerate}
\item[{\rm (i)}] $G$ is metacyclic, and $G\cong G_1(e,f)$
    where $f=2, \dots, e$, or

\item[{\rm (ii)}] $G$ is not metacyclic, in which case either $G\cong
    G_2(2;0,0)$, or $e\ge 3$ and $G\cong G_2(e; k,l)$ where
    $k,l\in\{0, 1\}$. In the latter case there are, up to isomorphism, just three groups for each $e$, with $G_2(e;0,1)\cong
    G_2(e;1,1)$.
\end{enumerate}
\end{theorem}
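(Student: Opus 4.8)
The plan is to reduce at once to the case where $G$ is not metacyclic, to locate inside $G$ a large abelian normal subgroup $N$ on which the generators act in a very restricted way, and then to read off the presentation of $G_{2}(e;k,l)$; throughout, the automorphism $\sigma$ is used to control the normalisation of the pair $(a,b)$. By It\^o's theorem $G$ is metabelian, and since $G=AB$ with $A\cap B=1$ and $|A|=|B|=n$ we have $|G|=n^{2}=2^{2e}$; also $\sigma^{2}$ fixes $a$ and $b$, so $\sigma$ is an involution. If $G$ is metacyclic, the classification of metacyclic isobicyclic $2$-groups carried out elsewhere gives $G\cong G_{1}(e,f)$ with $2\le f\le e$ together with an explicit isobicyclic pair, which is case (i); so from now on assume $G$ is not metacyclic. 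Since $G=\langle a,b\rangle$ we have $\Phi(G)=G^{2}$ and $G/\Phi(G)\cong\B Z_{2}\times\B Z_{2}$, so $G$ has exactly three maximal subgroups, which a counting argument using $A\cap B=1$ identifies as $M_{A}=\langle a,b^{2}\rangle=AB^{2}$, $M_{B}=\langle a^{2},b\rangle=A^{2}B$ and $M_{AB}=\langle ab\rangle\Phi(G)$; here $\sigma$ interchanges $M_{A}$ and $M_{B}$ and fixes $M_{AB}$. I would show that $M_{A}$ (hence $M_{B}=M_{A}^{\sigma}$) is metacyclic, and then, since by Blackburn's classification no minimal non-metacyclic $2$-group of order $2^{2e}$ is $2$-generated, $M_{AB}$ must be non-metacyclic; thus $G$ has a unique non-metacyclic maximal subgroup, which is how $G$ enters the list of Berkovich and Janko.

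Next I would locate the skeleton of $G$. Put $N=\langle a^{2}\rangle\langle b^{2}\rangle$. One shows $N\trianglelefteq G$, that $N$ is abelian, and --- because $A\cap B=1$ forces $\langle a^{2}\rangle\cap\langle b^{2}\rangle=1$ --- that $N\cong\B Z_{2^{e-1}}\times\B Z_{2^{e-1}}$. The quotient $G/N$ is generated by the two involutions $\bar a,\bar b$, hence is dihedral; excluding the case where it is dihedral of order $>4$ (here one plays off the metacyclicity of $M_{A},M_{B}$ against the non-metacyclicity of $M_{AB}$) shows $G/N\cong\B Z_{2}\times\B Z_{2}$, whence $\Phi(G)=N$, $G'\le N$, and $[b,a],[a^{2},b],[a,b^{2}]$ all lie in $N$. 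I would then check, using $\sigma$ and $e\ge2$, that $z:=a^{n/2}b^{n/2}$ is a central involution and that the residual freedom is measured by powers of $z$: metacyclicity of $M_{A}$ pins the action of $a$ on $\langle b^{2}\rangle$ down to inversion twisted by a power of $z$, giving $(b^{2})^{a}=b^{-2}z^{l}$, then applying $\sigma$ forces $(a^{2})^{b}=a^{-2}z^{l}$ with the \emph{same} $l$, and analysing $[b,a]$ yields $[b,a]=a^{2}b^{-2}z^{k}$; together with $[a^{2},b^{2}]=1$ (abelianness of $N$) and $k,l\in\{0,1\}$ these are precisely the defining relations of $G_{2}(e;k,l)$, and a comparison of orders ($|G|=2^{2e}$) confirms that they present $G$. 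The case $e=2$ would be treated separately: here $z=a^{2}b^{2}$ already lies in $\langle a^{2},b^{2}\rangle$, and a short direct analysis of the $2$-generated non-metacyclic groups of order $16$ forces $k=l=0$ and yields the simplified presentation of $G_{2}(2;0,0)$.

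Finally I would settle the redundancy statement: the isomorphism $G_{2}(e;0,1)\cong G_{2}(e;1,1)$ is produced by an explicit change of generators (of the shape $a\mapsto a^{1+n/4}$, or $a\mapsto az^{\varepsilon}$ and $b\mapsto bz^{\varepsilon'}$), while $G_{2}(e;0,0)$, $G_{2}(e;1,0)$ and $G_{2}(e;0,1)$ are pairwise non-isomorphic, as one sees from an invariant distinguishing them --- for example the isomorphism type of $Z(G)$ or of $G'$, or the number of elements of each order. I expect the main obstacle to be the middle paragraph: determining exactly which twisted actions of $a$ and $b$ on $N$, and which correction terms $z^{k},z^{l}$, actually occur --- and verifying that the a priori large range of possibilities collapses to $k,l\in\{0,1\}$ --- is the technical heart of the argument; cleanly excluding the larger dihedral quotients $G/N$ is a smaller but genuine sub-obstacle.
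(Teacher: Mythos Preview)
Your approach is genuinely different from the paper's and has real gaps at its technical core.

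The paper does \emph{not} begin by proving that $N=\langle a^{2}\rangle\langle b^{2}\rangle$ is abelian. Instead it analyses the derived group: one shows (Lemma~\ref{deri}) that $c=[b,a]=a^{d2^{u}}b^{-d2^{u}}$ with $d$ odd and that $G'=\langle c\rangle\times\langle a^{2^{v}}\rangle$ for certain $0\le u<v\le e$; then (Lemma~\ref{structurenew}) one proves that $u\ge 2$ forces $G$ metacyclic, while the non-metacyclic case forces $u=1$, $v=2$. Only \emph{after} this, in the proof of Theorem~\ref{thm:nonmetachar}, does one derive $[a^{2},b^{2}]=1$ from the formula $c^{a}=c^{s}a^{4t}$ and the identity $c^{a^{2}}=c^{b^{2}}$. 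Your plan treats the abelianness of $N$ as a routine preliminary (``One shows \ldots that $N$ is abelian''), but there is no cheap reason for it: metabelianness only gives $G'$ abelian, and $N$ strictly contains $G'$. Without this, your description $N\cong\B Z_{2^{e-1}}\times\B Z_{2^{e-1}}$ and the ensuing analysis of the $a$-action on $\langle b^{2}\rangle$ are unjustified. (Incidentally, once $N$ is known to contain $a^{2}$ and $b^{2}$, the factorisation $G=AB$ already gives $|G/N|\le 4$; the ``excluding larger dihedral quotients'' step is vacuous.)

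A second gap is the claim that $M_{A}=\langle a,b^{2}\rangle$ is metacyclic. You assert this and then use it to pin down $(b^{2})^{a}$, but you give no argument for it; Huppert's theorem does not apply to $2$-groups, and invoking the Berkovich--Janko classification here would be circular. The paper never uses metacyclicity of $M_{A}$; the relation $(b^{2})^{a}=b^{-2}z^{l}$ is obtained instead from the commutator analysis.

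Finally, your endgame is also off. The map $a\mapsto a^{1+n/4}$ (with $b\mapsto b$) changes $k$ only when $n/8$ is odd, i.e.\ only for $e=3$; and $a\mapsto az^{\varepsilon}$, $b\mapsto bz^{\varepsilon'}$ does not change $k$ at all for $e\ge 3$. The correct isomorphism $G_{2}(e;0,1)\cong G_{2}(e;1,1)$ is $a_{1}\mapsto a^{3}$, $b_{1}\mapsto b$ (Corollary~\ref{iso}). Likewise, the invariants you propose do not separate the remaining three groups: by Lemma~\ref{G3}(v) and (vii) one has $G'\cong C_{m}\times C_{m/2}$ and $Z(G)\cong C_{2}\times C_{2}$ for \emph{every} $(k,l)$. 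The paper distinguishes them by classifying all isomorphisms directly (Lemma~\ref{auto}): any isomorphism must preserve $l$, and when $l=0$ it must preserve $k$ as well.
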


The metacyclic groups $G_1(e,f)$ were treated in detail in \cite{DJKNS1}; for instance, it was shown there that, up to automorphisms of $G$, one can take the isobicyclic pair to have the form $a=g^r$ and $b=g^rh$, where $r$ is an odd integer such that $1\leq r\leq 2^{e-f}$. This paper is therefore devoted to the non-metacyclic groups $G_2(e; k,l)$.

These groups $G_2(e; k,l)$ have recently arisen in a purely group-theoretic
context. In~\cite[Chapter 87]{BJ} Berkovich and Janko, having classified the minimal non-metacyclic $2$-groups (i.e.~those with all their maximal subgroups metacyclic), then classify those
$2$-groups with a unique non-metacyclic maximal subgroup. Clearly such a group requires at most three generators (two to generate a metacyclic maximal subgroup, and one more outside it). The $3$-generator groups of this type are relatively easy to deal with, and Berkovich and Janko devote most of their analysis to the $2$-generator groups. In Corollary~87.13 they show that any such group factorises as a product of two cyclic groups, and conversely in Theorem 87.22 they show that any non-metacyclic group which factorises in this way (and is therefore a $2$-generator group) has a unique non-metacyclic maximal subgroup. Their analysis of the $2$-generator groups depends on considering the different possibilities for the commutator subgroup, and one part of the classification (essentially Theorem~87.19, see also~\cite[Theorem 4.11]{Jan}) is as follows:

\begin{theorem}\label{BJ87.19}~{\rm(Berkovich and Janko)}
Let G be a $2$-generator $2$-group with exactly one
non-metacyclic maximal subgroup. Assume that $G'\cong
C_{2^r}\times C_{2^{r+1}}$ where $r\ge 2$. Then
\begin{equation}
\begin{array}{ll}
G=\lg a,x \mid
    &a^{2^{r+2}}=1,\ [a,x]=v,\ [v,a]=b,\ v^{2^{r+1}}=b^{2^r}=[v,b]=1,\\
    &v^{2^r}=z,\ b^{2^{r-1}}=u,\ x^2\in\lg u,z\rg\cong C_2\times C_2,\ b^x=b^{-1},\\
    &v^x=v^{-1},\ b^a=b^{-1},\ a^4=v^{-2}b^{-1}w,\ w\in\lg u,z\rg\, \rg
\end{array}
\end{equation}
with $|G|=2^{2r+4}$ and $G'=\lg b\rg \times \lg v\rg \cong C_{2^r}\times C_{2^{r+1}}$. 
\end{theorem}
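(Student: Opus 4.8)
The plan is to reconstruct $G$ from the inside out: recover first the structure of $G'$ as a module for $G/G'$, and then the two ``top'' relations describing $x^2$ and $a^4$. Write $G=\langle a,x\rangle$ and put $v=[a,x]$ and $b=[v,a]$. Since $G$ is $2$-generated, $G'$ is the normal closure of $v$ in $G$; and since $G'\cong C_{2^r}\times C_{2^{r+1}}$ is abelian, the conjugation action of $G$ on $G'$ factors through the abelian quotient $G/G'$, so that $G'$ is a cyclic $\mathbb{Z}[G/G']$-module generated by $v$. The quotient $G/G'$ is a $2$-generated abelian $2$-group, and it is non-cyclic, since a nilpotent group with cyclic commutator quotient is cyclic while $G$ is not.

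The main step is to show that the hypothesis on maximal subgroups forces both $G/G'$ and the action of $G/G'$ on $G'$. For each maximal subgroup $M$ of $G$ one has $G'\le M$, hence $M'\le G'$; a metacyclic $2$-group has cyclic derived subgroup, so ``$M$ metacyclic'' forces ``$M'$ cyclic''. Combining the constraint that $G'$ be generated by the single element $v$ as a $\mathbb{Z}[G/G']$-module with the constraint that exactly one of the three maximal subgroups fail the ``$M'$ cyclic'' test should pin everything down: $G/G'\cong C_4\times C_2$, generated by $\bar a$ of order $4$ and $\bar x$ of order $2$, and, after a suitable adjustment of the generators, $x$ inverts $G'$ while $a$ acts by $v\mapsto vb$ and $b\mapsto b^{-1}$, where $G'=\langle b\rangle\times\langle v\rangle$ with $|b|=2^r$ and $|v|=2^{r+1}$.

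Granting this, most of the presentation falls out of the commutator calculus: one obtains $[v,a]=b$, $[v,b]=1$, $v^{2^{r+1}}=b^{2^r}=1$, $b^a=b^x=b^{-1}$ and $v^x=v^{-1}$, and then computes the lower central series, finding $\gamma_3=[G',G]=\langle b,v^2\rangle$ and, inductively, $\gamma_{k+2}=\langle b^{2^{k-1}},v^{2^k}\rangle$, so that $G$ has class $r+2$ with $\gamma_{r+2}=\langle u,z\rangle\cong C_2\times C_2$ for $u=b^{2^{r-1}}$ and $z=v^{2^r}$. It remains to locate $x^2$ and $a^4$. Since $\bar x$ has order $2$ we have $x^2\in G'$, and since $x$ inverts $G'$ the element $x^2$ is centralised by $x$, hence lies in the subgroup $\langle u,z\rangle$ of elements of order dividing $2$ in $G'$. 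Likewise $a^4\in G'$; its value is then forced by the remaining constraints (in particular that $\langle a,v\rangle=\langle a\rangle G'$ be metacyclic), giving $a^4=v^{-2}b^{-1}w$ for some $w\in\langle u,z\rangle$, and since $(v^{-2}b^{-1}w)^{2^r}=1$ one deduces $a^{2^{r+2}}=1$ with $a$ of order exactly $2^{r+2}$. Finally $|G'|=2^{2r+1}$ and $|G:G'|=8$, so $|G|=2^{2r+4}$.

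I expect the classification step of the second paragraph to be the main obstacle. One must unpack the single hypothesis ``exactly one non-metacyclic maximal subgroup'' into a workable condition on the action of $G/G'$ on the rank-$2$ group $G'$, then rule out the competing possibilities for $G/G'$ and for the module structure, and finally --- since a cyclic derived subgroup is necessary but not sufficient for metacyclicity --- confirm directly that the two surviving maximal subgroups really are metacyclic. A secondary, fiddly point is pinning down $a^4$ exactly, rather than merely modulo $\gamma_3$: the fact that the factor $v^{-2}b^{-1}$ cannot be absorbed into any change of generators is precisely what forces $|G|$ to be as large as $2^{2r+4}$.
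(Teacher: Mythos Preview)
The paper does not prove this theorem. Theorem~\ref{BJ87.19} is stated as a result of Berkovich and Janko, with a citation to~\cite[Chapter~87]{BJ} (essentially their Theorem~87.19) and to~\cite[Theorem~4.11]{Jan}; no proof is given in the paper itself. The paper's own contribution concerning these groups is Theorem~\ref{isomorphism}, which identifies a subfamily of the groups in~(4) with the groups $G_2(e;k,l)$ constructed independently in Section~\ref{sec:g2}. So there is no ``paper's proof'' to compare your proposal against.

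That said, your outline is a reasonable reconstruction of how such a classification argument might go, and you have correctly identified the hard step: extracting from the single hypothesis ``exactly one non-metacyclic maximal subgroup'' both the isomorphism type of $G/G'$ and the precise $\mathbb{Z}[G/G']$-module structure on $G'$. One point in your sketch deserves more care. You write that ``exactly one of the three maximal subgroups fail the `$M'$ cyclic' test'', but the implication you have is only one-directional: metacyclic implies cyclic derived subgroup. The non-metacyclic maximal subgroup might in principle still have cyclic derived subgroup, so a priori you only know that \emph{at most} one $M$ has $M'$ non-cyclic. You would need an additional argument (for instance, that if all three had cyclic $M'$ then $G$ itself would be metacyclic, contradicting the hypothesis) before you can use this count to pin down the module structure. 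Your later remark that you must ``confirm directly that the two surviving maximal subgroups really are metacyclic'' shows you are aware of the converse issue, but the forward direction also needs patching.

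The determination of $a^4$ is, as you say, fiddly; invoking metacyclicity of $\langle a\rangle G'$ is the right lever, but the argument that this forces $a^4\equiv v^{-2}b^{-1}\pmod{\langle u,z\rangle}$ rather than some other element of $G'$ would need to be written out.
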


One should regard~(4) as giving sixteen presentations for each $r$, since there are four possibilities for each of $x^2$ and $w$ in the Klein four-group $\lg u, z\rg$. In Theorem~\ref{isomorphism}, we will show that the groups $G_2(e; k,l)$ for $e\ge 4$ are exactly those groups $G$ in Theorem~\ref{BJ87.19} for which $x^2=z^k$ and $w=z^l$ for some $k, l\in\{0, 1\}$, with $e=r+2$. As noted by Janko in~\cite[p.~315]{Jan}, the classification problem is not completely solved since some pairs of presentations define isomorphic groups. Indeed Theorem~\ref{main1} shows that for each $r\ge 2$ there are, up to isomorphism, just three groups presented by~(4) with $x^2=z^k$ and $w=z^l$, those with $l=1$ and $k=0, 1$ being isomorphic to each other. As a consequence of Theorem~4.2, in~(2) we give slightly more transparent presentations for these groups, showing that each is an extension of its Frattini subgroup $\Phi(G)\cong C_{2^{r+1}}\times C_{2^{r+1}}$ by $C_2\times C_2$: the roles of $a, b$ and $a^{n/2}b^{n/2}$ in~(2) are played by $a, ax$ and the central involution $z$ in~(4). Moreover, all our structural results proved in Section~\ref{sec:g2} for the groups $G_2(e;k,l)$ apply to these groups $G$. For instance, we show that they are all metabelian, of exponent $2^e$ and nilpotence class $e$. In classifying all isomorphisms between the groups $G_2(e;k,l)$, we also determine their automorphisms; in particular, we show that for each $e\ge 3$, ${\rm Aut}\,G_2(e;k,l)$ has order $2^{4e-3}$ or $2^{4e-4}$ as $l=0$ or $1$.

Section~\ref{sec:deri} begins a 
structural analysis of isobicyclic $2$-groups in general, while
Section~\ref{sec:nonmeta} is devoted specifically to
non-metacyclic isobicyclic groups $G$. We show that if $n=2^e$ then either $G$ has a cyclic derived subgroup, in which case $e=2$ and $G\cong G_2(2;0,0)$, or $G$ has a derived group generated by
two elements, in which case $e\ge 3$ and $G$ is isomorphic to one of the three non-isomorphic groups of the form $G_2(e; k,l)$. This proves part~(ii) of Theorem~\ref{main1}, and since part~(i) is dealt with in~\cite{DJKNS1}, it completes the proof of that theorem. In Section~5 we apply results from the
preceding sections to the classification of regular embeddings
of complete bipartite graphs $K_{n,n}$ where $n$ is a power
of $2$.

A completely different proof of  Theorem~\ref{main1}(ii) has already been given in~\cite{DJKNS2}; it proceeds by induction on $e$, based on the fact that if $n=2^e$ then any $n$-isobicyclic group has an $m$-isobicyclic quotient where $m=2^{e-1}$. However, the main purpose of that paper was not to study these groups for their own sake, but rather to enumerate them and to obtain sufficient information about them to determine the corresponding graph embeddings. Here we present an alternative proof, designed to shed more light on the internal structure of these groups, and on how they are related to more general classes of $2$-groups.

\vskip3mm

\section{Non-metacyclic groups $G_2(e;k,l)$}\label{sec:g2}

In this section we analyse properties of the non-metacyclic
groups $G_2(e;k,l)$ appearing in Theorem~\ref{main1}.
Throughout this section we write $G(k,l)$, or simply $G$, instead of $G_2(e; l,k)$. For brevity we also write $n=2^e$ and $m=n/2=2^{e-1}$.

It is useful to note that each group $G$ has a Frattini subgroup $\Phi=\Phi(G)=\lg a^2\rg\times\lg b^2\rg\cong C_m\times C_m$, with $G/\Phi\cong C_2\times C_2$ (see~\cite[Prop.~2.1]{DJKNS2}). It therefore has three maximal subgroups, namely $\lg\Phi,a\rg=\Phi\cup\Phi a$,  $\lg\Phi,b\rg=\Phi\cup\Phi b$ and  $\lg\Phi,ab\rg=\Phi\cup\Phi ab$.

\begin{lemma} \label{G3} The following properties hold in $G=G(k,l)$.
\begin{enumerate}

\item[{\rm (i)}] The elements $a^m$, $b^m$, and $z=a^mb^m$
    are central involutions of $G$.

\item[{\rm (ii)}]
$$b^ja^i=\left\{
\begin{array}{lll}& a^ib^j, \quad &{\rm for\ }\,  i {\rm \ and\ } j {\rm\  \, even },\\
&a^ib^{-j}z^{lj/2}, \quad &{\rm for\ }\,  i {\rm \ odd\, and\ } j {\rm\ even},\\
&a^{-i}b^jz^{li/2}, \quad &{\rm for\ }\,  i {\rm \ even\, and\ } j {\rm\ odd },\\
&a^{-i}b^{-j}z^{k+l(i+j)/2}, \quad &{\rm for\ }\,  i {\rm \ and\ } j {\rm\ odd }.\\
\end{array}\right.$$

\item[{\rm (iii)}]
The element $g=a^ib^j$ has order
$$|g|\;\left\{
\begin{array}{lll}
& {\rm dividing\ } m, \quad &{\rm for\ }\,  i {\rm \ and\ } j {\rm\  \, even },\\
&=n, {\rm \ with\ } g^m=a^m,\quad &{\rm for\ }\,  i {\rm \ odd\, and\ } j {\rm\ even},\\
&=n, {\rm \ with\ } g^m=b^m,\quad &{\rm for\ }\,  i {\rm \ even\, and\ } j {\rm\ odd},\\
& {\rm dividing\ } 4, =2 {\rm \ if\ }k=l=0,\quad &{\rm for\ }\,  i {\rm \ and\ } j {\rm\  \, odd }.\\
\end{array}\right.$$

\item[{\rm (iv)}] The group $G$ is isobicyclic, that is,
    $G=\lg a\rg \lg b\rg ,$ where $|a|=|b|=2^e$ and $\lg
    a\rg \cap \lg b\rg =1$, and there is an involutory
    automorphism of $G$ interchanging $a$ and $b$.
    
\item[{\rm (v)}] $G'=\lg a^2b^{-2}z^k\rg \times \lg
    a^4z^l \rg$ with $\lg a^2b^{-2}z^k\rg\cong C_m$ and $\lg
    a^4z^l \rg \cong C_{m/2}$.
 
\item[{\rm (vi)}] $G$ is not metacyclic.

\item[{\rm (vii)}] $G$ has nilpotence class $e$, with upper central series $1=Z_0<Z_1<\cdots <Z_e=G$ where $Z_i=\lg a^{2^{e-i}}\rg\lg b^{2^{e-i}}\rg$ for $i=0, 1, \ldots, e$.

\end{enumerate}
\end{lemma}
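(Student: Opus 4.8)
The strategy is to work directly from the presentation~(2) of $G=G_2(e;k,l)$, establishing the parts in the order (i), (ii), (iii), (v), (vi), (vii), (iv), since each later part leans on the commutation and order formulas proved earlier. I would begin with part~(i): from the defining relations $[b,a]=a^2b^{-2}(a^mb^m)^k$, $(b^2)^a=b^{-2}(a^mb^m)^l$, $(a^2)^b=a^{-2}(a^mb^m)^l$ one checks that the subgroup $\Phi=\langle a^2\rangle\times\langle b^2\rangle$ is normalised and that conjugation by $a$ or $b$ acts on $\Phi$ by an automorphism of order dividing $2$ that inverts $a^2$ and $b^2$ modulo the cyclic group $\langle z\rangle$, $z=a^mb^m$. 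Since $z=a^mb^m$ with $a^m=(a^2)^{m/2}$, $b^m=(b^2)^{m/2}$ lies in $\Phi$ and is fixed by these automorphisms (here one uses that $m/2$ is even once $e\ge 3$, and handles $e=2$ separately using the simplified presentation~(3)), $z$ is central; $a^m$ and $b^m$ are central by the same computation, and they are involutions because $|a|=|b|=n$.

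For part~(ii) I would prove the four commutation rules by induction on $|i|+|j|$, the base cases being exactly the four defining relations together with part~(i). The inductive step amounts to pushing one more factor of $a^{\pm1}$ or $b^{\pm1}$ past $b^j$ or $a^i$ and simplifying the resulting powers of $z$ modulo $2$, using that $z$ is a central involution; the parity bookkeeping in the exponents $lj/2$, $li/2$, $k+l(i+j)/2$ is the routine but slightly delicate part, and care is needed with the convention $z^{1/2}$-type expressions, which only ever occur with an even numerator so are well-defined. Part~(iii) then follows quickly: for $i,j$ even, $a^ib^j\in\Phi$ which has exponent $m$; for $i$ odd, $j$ even, squaring $a^ib^j$ and using~(ii) gives $(a^ib^j)^2\in\langle a^4,\Phi^{even}\rangle$ with the right power, and iterating $m$ times lands on $a^m$ (and similarly with $a,b$ swapped for the third case); for $i,j$ both odd, one computes $(a^ib^j)^2$ directly from the last line of~(ii) and sees it lies in $\langle z\rangle$, so the order divides $4$ and equals $2$ precisely when $k=l=0$.

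For part~(v), compute $G'$ as the subgroup generated by all $[b^\epsilon,a^\delta]$; using~(ii) these basic commutators are expressed in terms of $a^2b^{-2}z^k$ and $a^4z^l$, and one checks these two elements commute and have the stated orders $m$ and $m/2$ (again using part~(iii) applied to $a^2b^{-2}$ and to $a^4$, plus centrality of $z$), and that their intersection is trivial, giving the direct product. Part~(vi) is then immediate: $G/\langle a^4z^l\rangle$ has an elementary abelian subgroup of rank $\ge 3$ (the images of $a^2, b^2, z$, or more simply $G/G'$ together with part~(v) shows $G$ cannot be metacyclic, since a metacyclic group has cyclic commutator quotient after killing a cyclic piece — I would phrase this via the standard criterion that every subgroup of a metacyclic $2$-group is $2$-generated, contradicting $\langle a^2,b^2,z\rangle\cong C_m\times C_m$ having a $C_2\times C_2\times C_2$ quotient when $e\ge 3$, or $C_2\times C_2$ being non-cyclic plus the $3$-generator structure when $e=2$). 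Part~(vii) is an induction on $i$: show $Z_i:=\langle a^{2^{e-i}}\rangle\langle b^{2^{e-i}}\rangle$ is central modulo $Z_{i-1}$ by applying the commutation rules~(ii), and that nothing larger is, so the class is exactly $e$. Finally part~(iv): $G=\langle a\rangle\langle b\rangle$ because $\Phi=\langle a^2\rangle\langle b^2\rangle\subseteq\langle a\rangle\langle b\rangle$ and the three cosets are covered; $\langle a\rangle\cap\langle b\rangle=1$ by comparing with $\Phi$ and orders; and the map $a\mapsto b$, $b\mapsto a$ is checked to preserve the relations in~(2) (the first relation becomes its own inverse-conjugate, and the second and third swap into each other), hence extends to an automorphism, which is involutory and has the right effect.

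The main obstacle, as usual with presentation-based arguments, is part~(ii): one must be sure the four-case commutation formula is actually \emph{consistent} — i.e. that the presentation does not collapse and that the $z$-exponents are genuinely well-defined mod $2$ — and that the inductive reductions never produce a contradiction. The cleanest way to sidestep a delicate consistency check is to invoke~\cite[Prop.~2.1]{DJKNS2} (quoted just before the lemma) for the order $|G|=2^{2e}$ and the structure of $\Phi$, so that once~(ii) yields a normal form $a^ib^jz^c$ with $0\le i,j<m$ (for the appropriate parity class) and $c\in\{0,1\}$ for the non-$\Phi$ cosets, a counting argument confirms these normal forms are distinct and hence all the derived relations are valid rather than merely consequences that might force a collapse.
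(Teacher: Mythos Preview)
Your overall plan is sound and close in spirit to the paper's proof, but there is one genuine error and a couple of places where your sketch diverges from the paper in ways worth noting.

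\medskip
\textbf{The error in (vi).} Your proposed argument for non-metacyclicity does not work. You claim that $\langle a^2,b^2,z\rangle\cong C_m\times C_m$ has a $C_2\times C_2\times C_2$ quotient, but $z=a^mb^m=(a^2)^{m/2}(b^2)^{m/2}$ already lies in $\langle a^2,b^2\rangle$, so this subgroup is just $\Phi\cong C_m\times C_m$, a rank~$2$ abelian group whose largest elementary abelian quotient is $C_2\times C_2$. The ``standard criterion'' you invoke therefore gives nothing. The paper's argument is much simpler: for $e\ge 3$, part~(v) says $G'\cong C_m\times C_{m/2}$ is non-cyclic, and a metacyclic group has cyclic $G'$ (it lies inside the cyclic normal factor), so $G$ is not metacyclic. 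The case $e=2$ needs a separate small check, which the paper does by observing that the only cyclic normal subgroups lie in $\Phi$ and none has cyclic quotient.

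\medskip
\textbf{Differences in (ii), (v), (vii).} For (ii) you propose induction on $|i|+|j|$; this can be made to work, but the paper's direct computation is cleaner: once the even/even case is known, the odd/even case is a single line (write $b^ja^i=(b^ja^{i-1})a=a^{i-1}b^ja=a^i((b^2)^a)^{j/2}$), and the odd/odd case reduces to the two mixed cases. For (v), your ``subgroup generated by all $[b^{\epsilon},a^{\delta}]$'' is not a priori $G'$; you must show it is \emph{normal}. The paper does this explicitly: with $c=[b,a]$ it sets $M=\langle c,c^a\rangle$, checks that conjugation by $a$ and by $b$ each permutes a generating pair of $M$, hence $M\lhd G$, so $M$ equals the normal closure of $c$, which is $G'$. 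For (vii), your induction on $i$ needs an argument that nothing \emph{outside} $Z_i$ is central mod $Z_{i-1}$; the paper instead inducts on $e$, computing $Z(G)=\{1,a^m,b^m,z\}$ directly from (ii) and observing $G/Z(G)\cong G_2(e-1;0,0)$, which stays in the family and makes the upper bound automatic.
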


\begin{proof}
If we define $z=a^mb^m$, the defining
relations for $G$ in~(2) take the form
\begin{equation}
a^n=b^n=[b^2, a^2]=1,\ [b,a]=a^2b^{-2}z^k,\
(b^2)^a=b^{-2}z^l,\ (a^2)^b=a^{-2}z^l,
\end{equation}
\f where $k, l\in\{0, 1\}$.

\vskip 3mm (i) Since $m$ is even, and $[a^2,b^2]=1$, the involutions $a^m$ and $b^m$ commute; they are distinct, so their product $z$ is also an involution. Since $z$ commutes with $a^2$ and $b^2$, and $m$ is divisible by $4$, we have
$(b^{m})^a=((b^2)^a)^{m/2}=(b^{-2}z^l)^{m/2}=b^{-m}z^{lm/2}=b^{-m}=b^m$,
so $b^{m}$ is an element of the centre $Z(G)$ of $G$. Similarly, $a^{m}\in Z(G)$, so $z\in Z(G)$.

\vskip 3mm (ii) Now we compute $b^ja^i$. Define
$c=[b,a]=b^{-1}a^{-1}ba$. If both $i$  and $j$ are even, then
$b^ia^j=a^jb^i$. If $i$ is odd  and $j$ is even, then since $i-1$ is even we have
$$b^ja^i=(b^ja^{i-1})a=a^{i-1}b^ja=a^i((b^2)^a)^{j/2}=a^ib^{-j}z^{lj/2}.$$
If $i$ is even  and $j$ is odd, then
$$b^ja^i=b^{-1}(b^{j+1}a^i)=(b^{-1}a^ib)b^j=((a^2)^b)^{
i/2}b^j=a^{-i}b^jz^{li/2}.$$
If both $i$ and $j$ are odd, then
$$\begin{array}{lll}
b^ja^i&=&b^{j-1}abca^{i-1}=b^{j-1}ab(b^{-2}a^2z^k)a^{i-1}=
a(b^{j-1})^a(a^{i+1})^bb^{-1}z^k\\&=&ab^{1-j}z^{l(j-1)/2}
a^{-i-1}z^{l(i+1)/2}b^{-1}z^k=a^{-i}b^{-j}z^{k+l(i+j)/2}.
\end{array}$$

\vskip 3mm (iii) If $i$ and $j$ are both even then (ii) implies that
$$g^2=(a^ib^j)^2=a^i(b^ja^i)b^j=a^{2i}b^{2j}\in\lg a^4\rg\times\lg b^4\rg\cong C_{m/2}\times C_{m/2},$$
so $g^m=1$. If $i$ is odd and $j$ is even then (ii) gives
$$g^2=a^i(b^ja^i)b^j=a^{2i}b^{-j}z^{lj/2}b^j=a^{2i}z^{lj/2},$$
so $g^4=a^4$; thus $g^{2^r}=a^{2^r}$ for all $r\ge 2$, so $|g|=|a|=n$ with $g^m=a^m$. The proofs in the other two cases are similar.

\vskip 3mm (iv) The  formul\ae\/ in (ii) show that every element
of $G$ can be expressed in the form $a^ib^j$, so $G=\lg a\rg
\lg b\rg$.
In order to see that $\lg a\rg \cap \lg b\rg =1$, note that $a^i$ and $b^j$ lie in distinct cosets of $\Phi$ unless $i$ and $j$ are both even; in this case the fact that $\Phi=\lg a^2\rg\times\lg b^2\rg$ ensures that $\lg a\rg\cap\lg b\rg=1$. The defining relations of $G$ are equivalent to those obtained by transposing $a$ and $b$, so this transposition
can be
extended an automorphism $\alpha$ of order $2$ of $G$. Hence $G$ is an $n$-isobicyclic group.

\vskip 3mm (v) Since $G=\lg a, b\rg$, $G'$ is the normal closure $\lg c^g\mid g\in G\rg$ in $G$ of the commutator $c=[b,a]$. We will show that this is the subgroup $M:=\lg c, c^a\rg $. Since
$c=[b,a]=a^2b^{-2}z^k$ we have
$c^a=(a^2b^{-2}z^k)^a=a^2b^2z^{k+l}$, and conjugation by $a$ transposes these two generators of $M$ since $[c,a^2]=1$. Similarly, conjugation by $b$ transposes the generators $c$ and $(c^a)^{-1}$ of $M$, so $M$ is normal in $G$ and hence $M=\lg c^g\mid g\in G\rg=G'$. Thus $G'$ has generators $c=a^2b^{-2}z^k=a^{km+2}b^{km-2}$ and $c^ac=a^4z^l=a^{lm+4}b^{lm}$; these generate disjoint cyclic groups of orders $m$ and $m/2$, so
$$G'=\lg a^2b^{-2}z^k\rg \times \lg a^4z^l\rg \cong C_m \times C_{m/2}.$$

\vskip 3mm (vi) For $e \ge 3$ the fact that $G'$ is not cyclic immediately implies that $G$ is not metacyclic. In the case $e=2$ it is easily seen that the only cyclic normal subgroups of $G$ are contained in $\Phi$, and these do not have cyclic quotients.

\vskip 3mm (vii) This follows by induction on $e$, using the facts that $Z(G)=\{1, a^m, b^m, z\}$ (a simple consequence of (ii)), that $G/Z(G)\cong G(e-1;0,0)$, and that $G(2;0,0)$, as presented in (3), clearly has class $2$.
\end{proof}

\begin{lemma}\label{auto}
Each isomorphism  $\s\colon G(k_1,l_1)\to G(k,l)$ is
given by setting $a_1^\s =a^ib^j$ and $b_1^\s =a^fb^h$, where
\begin{enumerate}
\item[{\rm (i)}] $k_1\equiv k+\frac{l(f+h-i-j)}{2} \pmod{2},$
\item[{\rm (ii)}] $l_1=l$, and
\item[{\rm (iii)}] either $i$ and $h$ are odd and $j$ and $f$ are even, or $i$ and $h$ are even and $j$ and $f$ are
odd.
\end{enumerate} Moreover, each choice of the parameters $i,j,f$ and $h$
satisfying the above conditions determines an isomorphism
$G(k_1,l_1)\to G(k,l)$.
\end{lemma}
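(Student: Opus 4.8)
The plan is to exploit the normal form for elements of $G(k,l)$ given by Lemma~\ref{G3}(ii), together with the Frattini structure $G/\Phi\cong C_2\times C_2$, to pin down the possible images of the generators, and then to convert each defining relation of $G(k_1,l_1)$ into a congruence among exponents by means of the multiplication and power rules of Lemma~\ref{G3}(ii),(iii). Since $G(k_1,l_1)=\lg a_1,b_1\rg$, a homomorphism $\s$ out of $G(k_1,l_1)$ is determined by $a_1^\s$ and $b_1^\s$, and by Lemma~\ref{G3}(ii) we may write $a_1^\s=a^ib^j$ and $b_1^\s=a^fb^h$, with the exponents well defined modulo $n$.

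First I would establish~(iii). The elements $a_1,b_1$ have order $n$, hence so do $a^ib^j$ and $a^fb^h$; but by Lemma~\ref{G3}(iii) an element $a^ib^j$ has order $n$ only when exactly one of $i,j$ is odd, so $\Phi a^ib^j$ and $\Phi a^fb^h$ each equal $\Phi a$ or $\Phi b$, and neither equals $\Phi$ or $\Phi ab$. On the other hand $\s$ carries the characteristic subgroup $\Phi$ onto $\Phi$, so it induces an isomorphism of the Klein four-groups $G(k_1,l_1)/\Phi\to G(k,l)/\Phi$, sending the basis $\Phi a_1,\Phi b_1$ to a basis; in particular $\Phi a^ib^j\ne\Phi a^fb^h$. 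The only surviving possibility is $\{\Phi a^ib^j,\Phi a^fb^h\}=\{\Phi a,\Phi b\}$, which is precisely the dichotomy~(iii).

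With the parities fixed, Lemma~\ref{G3}(iii) gives $(a^ib^j)^m$ and $(a^fb^h)^m$ equal to $a^m$ and $b^m$ in some order, so $z_1:=a_1^mb_1^m$ satisfies $z_1^\s=z$. One then imposes on the images each defining relation of $G(k_1,l_1)$. The relations $a_1^n=b_1^n=1$ and $[b_1^2,a_1^2]=1$ hold automatically, since $a^ib^j,a^fb^h$ have order $n$ and their squares lie in the abelian Frattini subgroup $\Phi$. Computing by Lemma~\ref{G3}(ii) the conjugates required by $(b_1^2)^{a_1}=b_1^{-2}z_1^{l_1}$ and $(a_1^2)^{b_1}=a_1^{-2}z_1^{l_1}$, and comparing powers of $z$, gives a congruence whose only content is $l_1\equiv l\pmod 2$, that is,~(ii). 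Finally, expanding the commutator $[a^fb^h,a^ib^j]$ by repeated use of Lemma~\ref{G3}(ii) and comparing its power of $z$ with that of $(a^ib^j)^2(a^fb^h)^{-2}z^{k_1}$ turns the relation $[b_1,a_1]=a_1^2b_1^{-2}z_1^{k_1}$ into exactly the congruence~(i). Throughout, the second alternative of~(iii) may be reduced to the first by composing $\s$ with the automorphism of $G(k,l)$ of Lemma~\ref{G3}(iv) that transposes $a$ and $b$, since the defining relations are invariant under this transposition.

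For the converse, observe that all the computations above are in fact equivalences: for each defining relation $R$ of $G(k_1,l_1)$, the assertion that $R$ holds for $a^ib^j,a^fb^h$ is, by the same calculation, equivalent to the corresponding item of~(i)--(iii) (or else is automatic). Hence, given parameters satisfying~(i)--(iii), the assignment $a_1\mapsto a^ib^j$, $b_1\mapsto a^fb^h$ respects all the defining relations and so extends to a homomorphism $\s\colon G(k_1,l_1)\to G(k,l)$; it is surjective because $a^ib^j$ and $a^fb^h$ generate $G(k,l)$ modulo its Frattini subgroup $\Phi$, and since $|G(k_1,l_1)|=|G(k,l)|=n^2$ it is therefore an isomorphism. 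The step I expect to be the main obstacle is the commutator computation behind~(i): one must carry the central factors through several applications of the four-part formula of Lemma~\ref{G3}(ii) while tracking the parities of $i,j,f,h$, and verify that the terms involving $l$ collect to exactly $\frac{l(f+h-i-j)}{2}$; one should also check that composing with the $a\leftrightarrow b$ automorphism genuinely settles the second case of~(iii) rather than merely relabelling the parameters.
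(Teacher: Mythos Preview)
Your proposal is correct and follows essentially the same route as the paper's own proof: both use Lemma~\ref{G3}(iii) to force the parities, argue that the images must lie in distinct cosets $\Phi a,\Phi b$ (you via the characteristic Frattini subgroup, the paper via maximal subgroups), reduce to the case $a_2\in\Phi a,\ b_2\in\Phi b$ by composing with the $a\leftrightarrow b$ automorphism, observe that $z_1\mapsto z$, and then check the defining relations one by one using Lemma~\ref{G3}(ii) to extract conditions (i) and (ii). The paper carries out the commutator and square computations explicitly, obtaining $[b_2,a_2]=a^{2i}b^{-2h}z^{k+l(h-i)/2}$ and $a_2^2b_2^{-2}z^{k_1}=a^{2i}b^{-2h}z^{k_1+l(j-f)/2}$, so the step you flag as the main obstacle is indeed short once one has Lemma~\ref{G3}(ii) in hand.
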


\begin{proof}
Recall that
$$\begin{array}{ll}
G=G(k,l)=\lg a, b \di& a^n=b^n=[b^2, a^2]=1,\,
[b,a]=a^2b^{-2}z^k,\\
&(b^2)^a=b^{-2}z^l,\,
(a^2)^b=a^{-2}z^l\rg ,
\end{array}$$
and define
$$\begin{array}{ll}
G_1=G(k_1,l_1)=\lg a_1, b_1 \di& a_1^n=b_1^n=[b_1^2, a_1^2]=1,\,
[b_1,a_1]=a_1^2b_1^{-2}z_1^{k_1},\\
& (b_1^2)^{a_1}=b_1^{-2}z_1^{l_1},\,
(a_1^2)^{b_1}=a_1^{-2}z_1^{l_1}\rg,
\end{array}$$
where $z_1=a_1^mb_1^m$.

An isomorphism $\sigma\colon G_1\to G$ is uniquely determined by an assignment
$$a_1\mapsto a_2 = a^ib^j,\qquad b_1\mapsto b_2 = a^fb^h$$
for some integers $i$, $j$, $f$ and $h$ such that $a_2$ and $b_2$ generate $G$ and satisfy the defining relations of $G_1$, when substituted for $a_1$ and $b_1$.

Now $a_1$ has order $n$, whereas Lemma~\ref{G3}(iii) shows that $a_2$ has order less than $n$ if $i$ and $j$ are both even or both odd. We may therefore restrict attention to mappings $\s$ for which $i$ and $j$ have opposite parity, that is, $a_2\in\Phi a\cup\Phi b$. A similar argument shows that $b_2\in\Phi a\cup\Phi b$. If $a_2$ and $b_2$ are both in $\Phi a$, or both in $\Phi b$, they are both contained in a maximal subgroup $\Phi\cup\Phi a$ or $\Phi\cup\Phi b$ of $G$ and hence cannot generate $G$. They therefore lie in distinct cosets $\Phi a$ and $\Phi b$, and by composing $\s$ with the automorphism $\alpha$ of $G$ transposing $a$ and $b$ if necessary, we may assume that $a_2\in\Phi a$ and $b_2\in\Phi b$, that is, $i$ and $h$ are odd while $j$ and $f$ are even. This ensures that $a_2$ and $b_2$ generate $G$, since none of the three maximal subgroups of $G$ contains both of them.

For any $g\in G$ we have $g^2\in\Phi\cong C_m\times C_m$, so $a_2$ and $b_2$ satisfy the first three relations $a_2^n=b_2^n=[b_2^2,a_2^2]=1$ for $G_1$.

Now $\sigma$ sends $z_1=a_1^mb_1^m$ to $a_2^mb_2^m$. Since $i$ is odd and $j$ is even, we have $a_2^m=a^m$ by Lemma~\ref{G3}(iii).
Similarly $b_2^m=b^m$, so $\sigma$ sends $z_1$ to $a^mb^m=z$.

We can now consider the fourth relation. Straightforward calculations give
$$[b_2,a_2]=[a^fb^h,a^ib^j]=a^{2i}b^{-2h}z^{k+l(h-i)/2}$$
and
$$a_2^2b_2^{-2}z^{k_1}=a^{2i}b^{-2h}z^{k_1+l(j-f)/2},$$
so we require
$$k_1\equiv k+\frac{l(f+h-i-j)}{2} \pmod{2},$$
giving condition~(i) of the Lemma.

For the fifth relation, we have 
$$(b_2^2)^{a_2}=((a^fb^h)^2)^{a^ib^j}=((b^{2h}z^{lf/2})^{a^ib^j}=b^{-2h}z^{l+lf/2}$$
and
$$b_2^{-2}z^{l_1}=b^{-2h}z^{l_1-lf/2};$$
since $f$ is even and $h$ is odd we require $l_1=l.$
Similar arguments show that the sixth and final relation is also equivalent to this, so we have condition~(ii).

Conditions~(i) and (ii) are necessary and sufficient conditions for $\sigma$ to be an isomorphism, in the case where $a_2\in\Phi a$ and $b_2\in\Phi b$, that is, $i$ and $h$ are even while $j$ and $f$ are odd. For the case where $a_2\in\Phi b$ and $b_2\in \Phi a$ we can compose $\sigma$ with $\alpha$, transposing $i$ with $j$, and $f$ with $h$; this gives condition (iii) of the Lemma, leaving conditions (i) and (ii) unchanged.
\end{proof}

\begin{corollary}\label{iso}
For each $e\ge 3$ we have $G(1, 1)\cong G(0, 1)$ while  $G(0, 0)$, $G(
1, 0)$ and $G(0, 1)$ are pairwise non-isomorphic.
\end{corollary}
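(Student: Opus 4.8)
The plan is to extract all the needed information directly from Lemma~\ref{auto}, which describes every isomorphism between groups of the form $G(k',l')$ and $G(k,l)$. The first observation is condition~(ii) of that lemma: any isomorphism $G(k_1,l_1)\to G(k,l)$ forces $l_1=l$. Hence $l$ is an isomorphism invariant, and this immediately separates the three groups with $l=0$ from those with $l=1$; in particular $G(0,0)$ and $G(1,0)$ cannot be isomorphic to $G(0,1)$ or to each other via a parameter change in $l$. So the corollary reduces to two checks: that $G(0,0)\not\cong G(1,0)$, and that $G(1,1)\cong G(0,1)$.

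For the isomorphism $G(1,1)\cong G(0,1)$, I would simply exhibit suitable parameters $i,j,f,h$ satisfying the conditions of Lemma~\ref{auto}. Here $l=l_1=1$, and condition~(i) becomes $k_1\equiv k+\tfrac{f+h-i-j}{2}\pmod 2$. Taking the ``standard'' assignment with $i,h$ odd and $j,f$ even (condition~(iii)), one wants to choose these so that $\tfrac{f+h-i-j}{2}$ is odd, which then converts $k=0$ into $k_1=1$. For instance $i=1$, $j=0$, $f=2$, $h=1$ gives $\tfrac{f+h-i-j}{2}=\tfrac{2+1-1-0}{2}=1$, so with $k=0$ we get $k_1\equiv 1$: this is an isomorphism $G(1,1)\to G(0,1)$. (One should double-check the parity constraints: $i=1,h=1$ odd and $j=0,f=2$ even — fine.) Thus $G(1,1)\cong G(0,1)$.

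For the non-isomorphism $G(0,0)\not\cong G(1,0)$, I would argue from condition~(i) with $l=l_1=0$: in that case the congruence collapses to $k_1\equiv k\pmod 2$. Since $k,k_1\in\{0,1\}$ this means $k_1=k$, so no isomorphism sends $G(0,0)$ to $G(1,0)$. Combining: the pair $(k,l)\in\{(0,0),(1,0),(0,1)\}$ is a complete isomorphism invariant among these representatives (with $l$ always invariant, and $k$ invariant when $l=0$), so $G(0,0)$, $G(1,0)$, $G(0,1)$ are pairwise non-isomorphic, while $G(1,1)$ falls into the class of $G(0,1)$.

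The only point requiring a little care — and the one I'd flag as the main obstacle — is making sure that Lemma~\ref{auto} really is being read as a biconditional: the last sentence of that lemma guarantees that \emph{any} choice of $i,j,f,h$ meeting conditions (i)--(iii) does yield an isomorphism, so the explicit parameters above genuinely produce the isomorphism $G(1,1)\cong G(0,1)$ rather than merely failing to obstruct it. With that in hand the corollary is immediate, so the proof is short; the substance is entirely contained in Lemma~\ref{auto}.
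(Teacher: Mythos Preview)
Your proof is correct and follows essentially the same approach as the paper: both read the three conditions of Lemma~\ref{auto} to see that $l$ is an isomorphism invariant, that $k$ is invariant when $l=0$, and then exhibit explicit parameters $i,j,f,h$ witnessing $G(1,1)\cong G(0,1)$. The only cosmetic difference is the choice of witness: the paper uses $i=3$, $j=f=0$, $h=1$ (giving an isomorphism $G(0,1)\to G(1,1)$), whereas you use $i=1$, $j=0$, $f=2$, $h=1$ in the other direction---both work equally well.
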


\begin{proof}
From Lemma~\ref{auto} we immediately deduce that $G(k, 0) \,\not\cong\, 
G(k', 1)$ for any $k$ and $k'$, and that $G(0,0)\,\not\cong\, G(1,0)$. Furthermore,
taking $i=3$, $j=f=0$ and $h=1$ in the definition of $\s $, we
get an isomorphism from $G(0,1)$ to $G(1,1).$
\end{proof}

\begin{corollary}\label{autos}
The automorphisms of $G(k,l)$ are given by $\s: a\mapsto a^ib^j,\; b\mapsto a^fb^h$ where
\begin{enumerate}
\item[{\rm (i)}] either $i$ and $h$ are odd and $j$ and $f$ are even, or $i$ and $h$ are even and $j$ and $f$ are odd, and
\item[{\rm (ii)}] $i+j\equiv f+h \pmod{4}$ if $l=1$.
\end{enumerate}
\end{corollary}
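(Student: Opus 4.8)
The plan is to specialise Lemma~\ref{auto} to the case where the source and target groups coincide, i.e.\ $G(k_1,l_1)=G(k,l)$, so that $k_1=k$ and $l_1=l$. Condition~(ii) of Lemma~\ref{auto} then reads $l=l$, which is automatic and carries no information. Condition~(iii) of Lemma~\ref{auto} is exactly condition~(i) here, describing the admissible parities of $i,j,f,h$: either $a\mapsto a_2\in\Phi a$, $b\mapsto b_2\in\Phi b$ (so $i,h$ odd, $j,f$ even), or $a_2\in\Phi b$, $b_2\in\Phi a$ (so $i,h$ even, $j,f$ odd). It remains only to translate condition~(i) of Lemma~\ref{auto}, namely $k\equiv k+\tfrac{l(f+h-i-j)}{2}\pmod 2$, into the stated form.

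The key step is therefore to simplify $k_1\equiv k+\tfrac{l(f+h-i-j)}{2}\pmod 2$ when $k_1=k$. Subtracting $k$, the condition becomes $\tfrac{l(f+h-i-j)}{2}\equiv 0\pmod 2$, i.e.\ $l(f+h-i-j)\equiv 0\pmod 4$. If $l=0$ this holds vacuously, so no extra condition is imposed, matching the fact that item~(ii) is stated only for $l=1$. If $l=1$, the condition is $f+h-i-j\equiv 0\pmod 4$, that is $i+j\equiv f+h\pmod 4$, which is precisely item~(ii). (Note that in the parity pattern of item~(i), $i+j$ and $f+h$ are both odd, so this is a genuine congruence mod~$4$ between two odd numbers, equivalently $i+j$ and $f+h$ are congruent mod~$4$.) Finally, one invokes the last sentence of Lemma~\ref{auto}: every choice of $i,j,f,h$ meeting conditions~(i)--(iii) of that lemma gives an isomorphism $G(k_1,l_1)\to G(k,l)$, hence here an automorphism of $G(k,l)$; so the listed maps are not merely necessary but exhaustive.

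I do not expect any real obstacle: the corollary is a direct bookkeeping consequence of Lemma~\ref{auto}. The only point requiring a line of care is the reduction of the mod-$2$ congruence on the half-integer $\tfrac{l(f+h-i-j)}{2}$ to the mod-$4$ congruence $i+j\equiv f+h$, and the observation that the $l=0$ case imposes nothing. I would also remark, for clarity, that one should check $\tfrac{f+h-i-j}{2}$ is genuinely an integer here: under the parities of item~(i), $f+h$ and $i+j$ have the same parity (both odd), so their difference is even, and the expression makes sense — exactly as in Lemma~\ref{auto}.
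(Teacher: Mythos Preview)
Your proposal is correct and takes exactly the same approach as the paper: specialising Lemma~\ref{auto} with $k_1=k$ and $l_1=l$. The paper's own proof is the single sentence ``This follows immediately from Lemma~\ref{auto}, with $k_1=k$ and $l_1=l$''; you have simply unpacked that sentence, and your reduction of the mod-$2$ condition on $\tfrac{l(f+h-i-j)}{2}$ to the mod-$4$ condition $i+j\equiv f+h$ is correct.
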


\begin{proof}
This follows immediately from Lemma~\ref{auto}, with $k_1=k$ and $l_1=l$.
\end{proof}

By counting choices of $i, j, f, h\in{\mathbb Z}_n$ satisfying the conditions of Corollary~\ref{autos}, we deduce that $|{\rm Aut}\,G(k,l)|=n^4/8$ or $n^4/16$ as $l=0$ or $1$.

\section{The derived group of an isobicyclic
$2$-group}\label{sec:deri}

In this section we begin an analysis of the structure of an
isobicyclic $2$-group. Let $(G,a,b)$ be an $n$-isobicyclic triple where $n=2^e\ge 4$. As before, let $c=[b, a]$ and let $\Phi$
denote the Frattini subgroup $\Phi(G)$ of $G$. Let  $\mho_i(G)=\lg g^{2^i}\mid \, g\in G\rg$, and let $K_i(G)=[G, G, \cdots, G]$ ($i$ times); in particular, $K_2(G)=G'$. These are all characteristic subgroups of~$G$.

The following properties of $G$ follow from more general known
results.

\begin{lemma}\label{general}
Let $(G,a,b)$ be a non-abelian $n$-isobicyclic triple where $n=2^e\ge 4$, and let $A=\lg a\rg$ and $B=\lg b\rg$. Then the following hold.
\begin{itemize}
\item[{\rm (i)}] The derived group $G'$ is abelian
    \emph{(see \cite{Ito1})}.

\item[{\rm (ii)}] $G'/(G'\cap A)$ is isomorphic to a
    subgroup of $B$ \emph{(see \cite[Corollary C]{CI})}.

\item[{\rm (iii)}] $G$ is metacyclic if and only if
    $G/\Phi(G')K_3(G)$ is metacyclic \emph{(see~\cite{Bla} or
    \cite[Hilfssatz III.11.3]{Hup})}.
\end{itemize}
\end{lemma}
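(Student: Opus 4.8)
The plan is to cite the three general results directly, and to explain precisely how each one specialises to our situation. For part~(i), the point is simply that $G=AB$ is a product of the two abelian subgroups $A=\lg a\rg$ and $B=\lg b\rg$, so It\^o's theorem~\cite{Ito1} on products of abelian groups applies immediately: any group which is the product of two abelian subgroups is metabelian, that is, $G'$ is abelian. No further work is needed here.

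For part~(ii), I would invoke Corollary~C of~\cite{CI}, which concerns a group $G=AB$ written as a product of two subgroups. Here the relevant statement is that, when $A$ and $B$ are abelian (or more generally under the hypotheses of that corollary), $G'\cap A$ and $G'\cap B$ are normal in $G$ and $G'/(G'\cap A)$ embeds in $B/(G'\cap B)$, hence in $B$. The only thing to check is that our isobicyclic triple satisfies the hypotheses of that corollary, which it does since $A, B$ are cyclic (in particular abelian) and $G=AB$; I would state this and then quote the conclusion. It may be worth remarking that the symmetric statement, with the roles of $A$ and $B$ exchanged, also holds by the automorphism $\alpha$ transposing $a$ and $b$, although only the stated form is needed later.

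For part~(iii), I would appeal to the standard criterion for metacyclicity of a $p$-group (Blackburn~\cite{Bla}, or Huppert~\cite[Hilfssatz III.11.3]{Hup}): a finite $p$-group $G$ is metacyclic if and only if $G/\Phi(G')K_3(G)$ is metacyclic. One direction is trivial, since a quotient of a metacyclic group is metacyclic; the content is the converse, and that is exactly what the cited references supply, so again no independent argument is required beyond verifying that $G$ is a $p$-group (here $p=2$), which is part of our standing hypothesis.

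The main obstacle, such as it is, is not mathematical depth but bookkeeping: each of these three facts is stated in the literature under slightly different notational conventions (one-sided versus two-sided factorisations, $G'\cap A$ versus its normal core, the precise form of the metacyclicity quotient), so the real task is to confirm that the hypotheses of each cited theorem are met by an isobicyclic triple and that the conclusion is being quoted in a form we can use in the later sections. Since each hypothesis reduces to ``$G$ is a non-abelian finite $2$-group expressible as a product of two cyclic subgroups,'' which is immediate from the definition of an isobicyclic triple together with It\^o's theorem for the solvability needed in~(iii), this verification is routine and I would keep the proof to a few lines, essentially a guided citation.
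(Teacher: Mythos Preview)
Your proposal is correct and matches the paper's approach exactly: the paper gives no proof for this lemma at all, treating it as a collection of known facts with embedded citations, and your plan to verify that each cited result applies to the isobicyclic setting is precisely the intended reading. One minor remark: the appeal to It\^o for solvability in~(iii) is unnecessary, since $|G|=|A||B|=2^{2e}$ already makes $G$ a $2$-group and hence within the scope of Blackburn's criterion.
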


\begin{lemma} \label{Z_i} {\rm \cite[Lemma 3.1]{DJKNS1}}
Let $G$ be an isobicyclic $2$-group of exponent $2^e\ge 4$. Then
$G$ has a central series $1=Z_0<Z_1<Z_2<\cdots<Z_e=G$ of
subgroups $Z_i=\lg a^{2^{e-i}}\rg\lg b^{2^{e-i}}\rg$ of order~$2^{2i}$. Moreover,
$\mho_i(G)=Z_{e-i}$ and $Z_i/Z_{i-1}\cong C_2\times C_2$ for
each $i\in\{1,2,\ldots, e\}$. In particular, for every element
$g\in Z_i$ we have $|g|\le 2^i$.
\end{lemma}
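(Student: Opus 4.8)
The plan is to argue by induction on $e$, taking as inductive claim the full statement of the lemma (so that $\exp(G)=2^e$ is itself part of what is proved); it is convenient to let the induction start at the degenerate case $e=1$, where $|A|=|B|=2$ and $A\cap B=1$ force $G\cong C_2\times C_2$ and every assertion is immediate. Note first that the hypotheses pin down $|G|=|A|\,|B|=2^{2e}$ and $|a|=|b|=2^e$, and that $G$ is never cyclic: a cyclic $2$-group has a unique subgroup of each order, so $G$ cyclic would force $A=B$, against $A\cap B=1$. The whole content of the lemma is concentrated in one assertion, which I would prove first: \emph{$Z_1:=\lg a^{2^{e-1}},\,b^{2^{e-1}}\rg$ is a central subgroup of $G$ of order $4$.}

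To prove this I would show $A\cap A^b\ne1$. Indeed, if $A\cap A^b=1$ then $|AA^b|=|A|\,|A^b|=2^{2e}=|G|$, so $\lg A,A^b\rg=G$; since $a^b=a[a,b]$ this gives $G=\lg a,[a,b]\rg$, and as $[a,b]\in G'\subseteq\Phi(G)$ and the elements of the Frattini subgroup are non-generators, $G$ would be cyclic --- a contradiction. So $A\cap A^b$ is a nontrivial subgroup of the cyclic $2$-group $A$; being cyclic it contains the unique involution $a^{2^{e-1}}$ of $A$, and being also a nontrivial subgroup of the cyclic group $A^b$ it contains the unique involution $(a^b)^{2^{e-1}}=(a^{2^{e-1}})^b$ of $A^b$. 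A cyclic group has only one involution, so $(a^{2^{e-1}})^b=a^{2^{e-1}}$; thus $b$, and trivially $a$, centralise $a^{2^{e-1}}$, so $a^{2^{e-1}}\in Z(G)$. Applying the automorphism $\alpha$ transposing $a$ and $b$ gives $b^{2^{e-1}}\in Z(G)$ too, and since $\lg a^{2^{e-1}}\rg\cap\lg b^{2^{e-1}}\rg\le A\cap B=1$ we conclude that $Z_1=\lg a^{2^{e-1}}\rg\times\lg b^{2^{e-1}}\rg\cong C_2\times C_2$ is central of order $4$.

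For the inductive step ($e\ge2$), one checks that $\alpha$ fixes $Z_1$ setwise, that $A\cap Z_1=\lg a^{2^{e-1}}\rg$ and $B\cap Z_1=\lg b^{2^{e-1}}\rg$, and that $AZ_1\cap BZ_1=Z_1$; hence in $\o G:=G/Z_1$ the images $\o a,\o b$ have order $2^{e-1}$, generate cyclic subgroups meeting trivially, and are transposed by the automorphism induced by $\alpha$, so $\o G$ is a $2^{e-1}$-isobicyclic $2$-group. By the inductive claim $\o G$ has a central series $\o Z_0<\o Z_1<\dots<\o Z_{e-1}$ with the stated properties and $\exp(\o G)=2^{e-1}$. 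Taking $Z_{i+1}$ ($i=0,\dots,e-1$) to be the preimage of $\o Z_i$ in $G$ (which recovers $Z_1$ as the preimage of $\o Z_0=1$) and using $Z_1\subseteq\lg a^{2^{e-i}}\rg\lg b^{2^{e-i}}\rg$, we get subgroups $Z_i=\lg a^{2^{e-i}}\rg\lg b^{2^{e-i}}\rg$ (with $Z_0=1$) and $|Z_i|=2^{2i}$, forming a central series of $G$ with $Z_i/Z_{i-1}\cong C_2\times C_2$. Then $\exp(G)\le2^e$ (squaring passes down the series), while $|a|=2^e$, so $\exp(G)=2^e$; the identity $\mho_i(G)=Z_{e-i}$ follows from $Z_{e-i}=\lg a^{2^i},b^{2^i}\rg\le\mho_i(G)$ together with $\mho_i(G)Z_1/Z_1=\mho_i(\o G)=\o Z_{e-1-i}=Z_{e-i}/Z_1$; and $|g|\le2^i$ for $g\in Z_i$ is obtained by squaring $i$ times.

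The single delicate point is the centrality of $Z_1$, and I would expect that to be where the work in \cite[Lemma 3.1]{DJKNS1} is concentrated. The argument above sidesteps the obvious but unpleasant alternative --- expressing $[a^{2^{e-1}},b]$, via the metabelian commutator identities, as a sum of conjugates of $[a,b]$ inside the abelian group $G'$ and forcing it to vanish using $a^{2^e}=1$ --- in favour of the soft fact that $\Phi(G)$ consists of non-generators. It is worth confirming that the small cases $e=1,2$ need no separate treatment: they do not, since the displayed argument is uniform in $e$.
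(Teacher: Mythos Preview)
Your proof is correct, and it takes a genuinely different route from the paper's. Both arguments reduce everything to the single claim that $a^{2^{e-1}}$ is central, and then iterate through the quotient $G/Z_1$. The difference is in how that claim is established. The paper invokes the Douglas--It\^o theorem that in a product of two cyclic groups the core of each factor is nontrivial; since $\lg a^{2^{e-1}}\rg$ is the unique minimal subgroup of $A$, it lies in (and is characteristic in) the core, hence is normal in $G$ and therefore central. Your argument is entirely elementary: you show $A\cap A^b\ne 1$ by observing that $A\cap A^b=1$ would force $G=\lg a,[a,b]\rg$ and hence, via the Frattini property, $G$ cyclic; the unique involution of this intersection must then coincide with those of $A$ and $A^b$, giving $(a^{2^{e-1}})^b=a^{2^{e-1}}$. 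Your approach buys self-containment---no appeal to an external structural theorem about bicyclic groups---at the cost of a couple of extra lines; the paper's approach is terser but imports a result whose proof is itself nontrivial. The inductive bookkeeping you give (verifying that $\overline G$ is again isobicyclic, pulling back the series, and deducing $\mho_i(G)=Z_{e-i}$ from the quotient together with $Z_{e-i}\le\mho_i(G)$) is more explicit than the paper's outline and entirely sound.
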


\begin{proof}
We proved this result as Lemma~3.1 of~\cite{DJKNS1}, so we simply outline the argument here. By a result of Douglas~\cite{D} and It\^ o~\cite{Ito2} (see also~\cite[VI.10.1(a)]{Hup}), the core of $A$ in $G$ is nontrivial. Since $\lg a^{2^{e-1}}\rg$ is the unique minimal normal subgroup of $A$ it is therefore normal in $G$, and hence central. The same applies to $\lg b^{2^{e-1}}\rg$, so these two disjoint subgroups generate a central subgroup $Z_1\cong C_2\times C_2$. Now apply the same argument to the isobicyclic group $G/Z_1$, and iterate.
\end{proof}

\begin{lemma} \label{deri}
Let $(G,a,b)$ be a non-abelian $n$-isobicyclic triple where $n=2^e\ge 4$, and let $A=\lg a\rg$ and $B=\lg b\rg$. Then $G$ has the following properties.
\begin{enumerate}
\item[{\rm (i)}] There exists an odd integer $d<2^e$ and
    integers $u$ and $v$ such that $0\le u<v\le e$,
    $c=[b,a]=a^{d2^u}b^{-d2^u}$ and $G'=\lg c\rg \times \lg
    a^{2^v}\rg =\lg c\rg \times \lg b^{2^v}\rg$. In
    particular, $G'$ is cyclic if $v=e$. Moreover,
    $[a^{2^u}, b^{2^v}]=[b^{2^u}, a^{2^v}]=1$.

\item[{\rm (ii)}] $\lg c\rg \cap A=\lg c\rg \cap B=1$,
    $|c|=2^{e-u}$, and for each integer $j$ such that $0\le
    j\le e-u$ there exists an odd integer $h$ such that
    $c^{2^j}=a^{h2^{u+j}}b^{-h2^{u+j}}$.

\item[{\rm (iii)}] Either $G'=\lg c\rg$, or $G'=\lg c,
    c^a\rg =\lg c, c^b\rg $ with $c^a=c^sa^{t2^v}$ and
    $c^b=c^sb^{-t2^v}$
where  $s$ and $t$ are odd.
\end{enumerate}
\end{lemma}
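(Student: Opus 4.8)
The plan is to pin down the commutator $c=[b,a]$ precisely inside the filtration of $G$ by the characteristic subgroups $\mho_i(G)=Z_{e-i}$ of Lemma~\ref{Z_i}, and then to exploit three rigid features of the situation: the disjointness $A\cap B=1$ (so that an equality $a^ib^j=a^{i'}b^{j'}$ forces $i\equiv i'$, $j\equiv j'\pmod{2^e}$), the involutory automorphism $\alpha$ transposing $a$ and $b$, which sends $c$ to $c^{-1}$, and the fact from Lemma~\ref{Z_i} that $Z_{e-i}=\lg a^{2^i}\rg\lg b^{2^i}\rg$ with each quotient $Z_{e-i}/Z_{e-i-1}\cong C_2\times C_2$ \emph{central} in $G/Z_{e-i-1}$.

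For part (i): since $\mho_1(G)=Z_{e-1}$, the quotient $G/\mho_1(G)$ has exponent $2$, so $G'\le\mho_1(G)$ and in particular $c\in Z_{e-1}$. Let $u\ge1$ be maximal with $c\in\mho_u(G)=Z_{e-u}$; then $u\le e-1$. Writing the general element of $Z_{e-u}$ uniquely as $a^{2^up}b^{2^uq}$ (here $A\cap B=1$ is used), put $c=a^{2^up}b^{2^uq}$; maximality of $u$ forces $p$ or $q$ odd, and applying $\alpha$ to $c$ and inverting equates $b^{2^up}a^{2^uq}$ with $b^{-2^uq}a^{-2^up}$, whence $b^{2^u(p+q)}=a^{-2^u(p+q)}\in A\cap B=1$; thus $2^{e-u}\mid p+q$ and $c=a^{d2^u}b^{-d2^u}$ with $d:=p$ odd, normalised so $1\le d<2^{e-u}\le2^e$. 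Because $Z_{e-u}$ is normal and contains $c$, we get $G'=\lg c^G\rg\le Z_{e-u}$. Now comes the first real point: since $Z_{e-u}/Z_{e-u-1}\cong C_2\times C_2$ is central in $G/Z_{e-u-1}$, conjugation by $G$ acts trivially on it, so the ``coordinate-sum mod $2$'' map $\epsilon\colon Z_{e-u}\to Z_{e-u}/Z_{e-u-1}\to C_2$, $\epsilon(a^{2^um}b^{2^um'})=m+m'$, is a $G$-invariant homomorphism; as $\epsilon(c)=d-d=0$, its kernel is a normal subgroup containing $c$, so $G'\le\ker\epsilon$, and hence $G'\cap A\le\ker\epsilon\cap A=\lg a^{2^{u+1}}\rg$. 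Therefore $G'\cap A=\lg a^{2^v}\rg$ with $v\ge u+1$, and $\alpha$ gives $G'\cap B=\lg b^{2^v}\rg$. The commutator relations now drop out, since $c,a^{2^v},b^{2^v}$ all lie in the abelian group $G'$ while $a^{d2^u}$ generates $\lg a^{2^u}\rg$: $[c,b^{2^v}]=1$ yields $[a^{2^u},b^{2^v}]=1$, and symmetrically $[b^{2^u},a^{2^v}]=1$. Finally, by Lemma~\ref{general}(ii) the group $G'/(G'\cap A)$ is cyclic and embeds in $B$; under the bijection $G/A\leftrightarrow B$ realising this, the coset $Ac$ corresponds to $b^{-d2^u}$, of order $2^{e-u}$, while $G'\le Z_{e-u}$ confines the image to $\lg b^{2^u}\rg$, also of order $2^{e-u}$; so $\bar c$ generates $G'/(G'\cap A)$, giving $G'=\lg c\rg\lg a^{2^v}\rg$ and $|c|=2^{e-u}$, and an order count ($|G'|=2^{e-u}\cdot2^{e-v}$) with $A\cap B=1$ forces the product to be direct.

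For part (ii): $|c|=2^{e-u}$ is already in hand; since $Z_{e-u}/Z_{e-u-1}$ has exponent $2$ we get $c^2\in Z_{e-u-1}$, and inductively $c^{2^j}\in Z_{e-u-j}=\lg a^{2^{u+j}}\rg\lg b^{2^{u+j}}\rg$, so the same $\alpha$-argument as above yields $c^{2^j}=a^{h2^{u+j}}b^{-h2^{u+j}}$, with $h$ odd because otherwise $c^{2^j}$ would have order at most $2^{e-u-j-1}<2^{e-u-j}=|c^{2^j}|$; and $\lg c\rg\cap A=1$ because $c^j\in A$ makes the image of $c^j$ in $G'/(G'\cap A)$ trivial, so $2^{e-u}\mid j$ and $c^j=1$, with $\lg c\rg\cap B=1$ following via $\alpha$. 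For part (iii): $G'=\lg c^G\rg$ is abelian; if $G'=\lg c\rg$ we are in the first case, so assume otherwise, so that $G'\cap A=\lg a^{2^v}\rg\ne1$, $v<e$, and $G'/\lg c\rg\cong\lg a^{2^v}\rg$ is cyclic. Write $c^a=c^sa^{t2^v}$ (legitimate since $c^a\in G'=\lg c\rg\times\lg a^{2^v}\rg$); conjugation by $a$ fixes $G'\cap A$ pointwise and so induces an automorphism of $G'/(G'\cap A)=\lg\bar c\rg$ sending $\bar c$ to $\bar c^s$, whence $s$ is odd, and applying $\alpha$ to $c^a=c^sa^{t2^v}$ (using $\alpha(c^a)=(c^b)^{-1}$) gives $c^b=c^sb^{-t2^v}$ with the same $s,t$. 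To see $t$ is odd I would run the parity trick of part~(i) again: the homomorphism $\delta\colon G'=\lg c\rg\times\lg a^{2^v}\rg\to C_2$ recording the parity of the $\lg a^{2^v}\rg$-component has $\ker\delta$ normalised by both $a$ and $b$ \emph{exactly when $t$ is even} (one checks this on $c$, $a^{2^v}$, $b^{2^v}$, using $[a^{2^v},b]\in Z_{e-v-1}$ and that $\delta$ vanishes on $Z_{e-v-1}\cap G'$); since $\delta(c)=0$ but $\delta(a^{2^v})=1$, a $G$-invariant $\delta$ would force $G'=\lg c^G\rg\le\ker\delta\subsetneq G'$, which is absurd, so $t$ is odd. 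Then $\lg c,c^a\rg=\lg c,a^{t2^v}\rg=\lg c\rg\times\lg a^{2^v}\rg=G'$, and likewise $\lg c,c^b\rg=G'$.

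The main obstacle: the $\alpha$-manipulations and the order bookkeeping are routine, and the real content lies in the two parity arguments — the strict inequality $u<v$ in part~(i) and the oddness of $t$ in part~(iii). Both rest on recognising the correct $G$-invariant homomorphism to $C_2$ (on $Z_{e-u}$, respectively on $G'$) and on the structural fact from Lemma~\ref{Z_i} that the successive quotients of the $Z_i$ are central Klein four-groups. Of these, verifying that $\ker\delta$ is $G$-invariant precisely when $t$ is even — which requires controlling $(a^{2^v})^b$ via $[a^{2^v},b]\in Z_{e-v-1}$ and checking $\delta$ kills that term — is the fiddliest step and the one I would expect to demand the most care.
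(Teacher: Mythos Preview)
Your two parity arguments — the $\epsilon$-map on $Z_{e-u}$ giving $u<v$ in one stroke, and the $\delta$-map on $G'$ forcing $t$ odd — are genuinely nicer than the paper's route. The paper first proves only $u\le v$, then much later derives $u\ne v$ by a separate contradiction using the central quotient $G'/Z_{e-(v+1)}$; your $\epsilon$ does both at once. Likewise, for $t$ odd the paper instead writes $c^a=c^sa^{t2^q}$ with $t$ odd and $q\ge v$ \emph{by definition}, then shows $M=\lg c,c^a\rg=\lg c\rg\times\lg a^{2^q}\rg$ is normal (using part~(ii) to get $b^{2^q}\in M$) and hence $M=G'$, forcing $q=v$. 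Your $\delta$-argument amounts to the contrapositive of this, and the verification you flag as fiddly (that $\delta$ kills $Z_{e-v-1}\cap G'$, hence kills $[a^{2^v},b]$) does go through.

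There is, however, a real gap in your derivation of $|c|=2^{e-u}$ and $G'=\lg c\rg\times\lg a^{2^v}\rg$. You assert that the Conder--Isaacs embedding $G'/(G'\cap A)\hookrightarrow B$ is ``realised'' by the coset bijection $G/A\leftrightarrow B$, and then read off the order of $\bar c$ as $|b^{-d2^u}|=2^{e-u}$. But that bijection is \emph{not} a group homomorphism: for $g_1,g_2\in G'$ the $B$-part of $g_1g_2$ is not the product of the $B$-parts, since $A$ is not normal. So you cannot transport the order of $\bar c$ across it, and the Conder--Isaacs isomorphism is not asserted to be this map. Your part~(ii) then assumes $|c|=2^{e-u}$ (``already in hand''), so the circularity propagates.

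The fix is short and is exactly what the paper does: observe that $G'=\lg c^g:g\in G\rg$ is abelian, hence has exponent $|c|$, so $c$ has maximal order in $G'$. Since $\lg c\rg\cap A=1$ (which you correctly get from the $\alpha$-symmetry $c^j=a^{m_j}b^{-m_j}$), the image $\bar c$ has order $|c|$ in the cyclic group $G'/(G'\cap A)$, and being of maximal order it generates; thus $G'=\lg c\rg\times\lg a^{2^v}\rg$. The value $|c|=2^{e-u}$ then comes from an index count: $L:=G'\lg b^{2^u}\rg=\lg a^{2^u},b^{2^u}\rg=Z_{e-u}$, and comparing $|L|=2^{2(e-u)}$ with $|G'|\cdot|\lg b^{2^u}\rg|/|G'\cap\lg b^{2^u}\rg|=|c|\cdot 2^{e-u}$ gives $|c|=2^{e-u}$. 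Once this is in place, the rest of your argument stands.
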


\begin{proof}
Since $G=AB$, each element can be written as $a^ib^j$, and since
$A\cap B=1$ this representation is unique. Let $c=[b, a]=a^rb^w$.
Since the automorphism $\alpha$ interchanges $a$ and $b$, we have
$$b^ra^w=[b^{\alpha}, a^{\alpha}]=[a, b]=[b,a]^{-1}=b^{-w}a^{-r}.$$
Therefore  $w\equiv -r\ (\mod 2^e)$ and so $c=a^rb^{-r}$. We can write
$r=d2^u$ where $d$ is odd, $d<2^e$ and $0\le u\le e$. Similarly, for every integer $j$
there is an integer $k$ such that $c^j=a^{k}b^{-k}$. In
particular, $\lg c\rg \cap A=\lg c\rg \cap B=1$, as claimed in (ii).

Let the cyclic group $G'\cap A$ be generated by $a^{2^v}$, where $v\le e$. Applying $\alpha$ gives $G'\cap B=\lg
b^{2^v}\rg $. Since $G=\lg a,b\rg$, Lemma~III.1.11
of \cite{Hup} implies that $G'=\lg c^g\di g\in G\rg
$. By Lemma~\ref{general}(i), $G'$ is abelian, so
$c$ is an element of $G'$ of maximal order. Since $\lg c\rg
\cap A=\lg c\rg \cap B=1$, we see that $\lg c\rg \times \lg
a^{2^v}\rg\le G'$. By Lemma~\ref{general}(ii), $G'/\lg
a^{2^v}\rg $ is cyclic. Since $\lg c\rg \cap A=1$ again,  the
image of $c$ in $G'/\lg a^{2^v}\rg $ has order
$|c|$, so it is an element of $G'/\lg a^{2^v}\rg $ of
maximal order and therefore generates $G'/\lg a^{2^v}\rg$. This gives $G'=\lg c\rg \times \lg a^{2^v}\rg$ and hence, by applying $\alpha$,
$G'=\lg c\rg \times \lg b^{2^v}\rg $.

From $a^{2^v}c=ca^{2^v}$ and $c=a^{d2^u}b^{-d2^u}$ we see that $[b^{d2^u},a^{2^v}]=1$. Hence $[\lg b^{d2^u}\rg,a^{2^v}]=1$, and in particular, since $d$ is odd, $[b^{2^u},a^{2^v}]=1$. By symmetry, $[a^{2^u},b^{2^v}]=1$.

Since $\lg c\rg=\lg a^{d2^u}b^{-d2^u}\rg\le \mho_u(G)= Z_{e-u}$, Lemma~\ref{Z_i} shows that $|c|\le 2^{e-u}$.  Since $c$
is an element of maximal order in $G'$, we have
$2^{e-v}=|a^{2^v}|\le |c|\le 2^{e-u}$, so $u\le v$. This proves (i), apart from the inequality $u\ne v$, which follows later.

To prove (ii), let $L$ denote the subgroup  $G'\lg b^{2^u}\rg $. Then
$$L=\lg a^{d2^u}b^{-d2^u},
b^{2^v}\rg \lg b^{2^u}\rg = \lg a^{2^u},  b^{2^u}\rg =\lg
a^{2^u}\rg \lg b^{2^u}\rg =Z_{e-u}.$$ \f Computing the order
$$2^{2(e-u)}=|L|=|G'||\lg b^{2^u}\rg |/|G'\cap \lg b^{2^u}\rg |=|c|
2^{e-v}2^{e-u}/2^{e-v}=|c| 2^{e-u},$$ \f we see that $|c|=2^{e-u}$.

For each $j=0, 1, \ldots, e-u$ we have
$c^{2^j}\in Z_{e-(u+j)}$, so
$c^{2^j}=a^{h2^{u+j}}b^{-h2^{u+j}}$ for some integer $h$. Since
$|c^{2^j}|=2^{e-(u+j)}$, it follows that $c^{2^j}\not\in
Z_{e-(u+j+1)}$, so $h$ is odd. This proves (ii).

We now consider (iii). Since $c^a\in G'=\lg c\rg\times \lg
a^{2^v}\rg$, either $c^a\in\lg c\rg$, or
$c^a=c^sa^{t2^q}$ for some integers $s,t$ and~$q$ where $t$ is
odd and $q\ge v$.
In the former case we have $G'=\lg c\rg$, satisfying (iii); we may therefore assume the latter, in which case we also have $c^b=c^sb^{-t2^q}$. Define $M=\lg c,
c^a\rg$, so $M=\lg c\rg \times \lg a^{2^q}\rg $. From
the preceding paragraph we know that
$c^{2^{q-u}}=a^{h2^q}b^{-h2^q}$ for some odd~$h$. Therefore
$$b^{2^q}\in \lg b^{h2^q}\rg =\lg c^{-2^{lqu}}a^{h2^q}\rg \le
\lg c^{2^{q-u}}, a^{2^q}\rg \le M,$$ which implies that $M=\lg
c\rg \times\lg b^{2^q}\rg.$ Now $M^a=\lg c, a^{2^q}\rg ^a=M$
and $M^b=\lg c, b^{2^q}\rg ^b=M$, so $M^g=M$ for each $g\in
G$. In particular, $c^g\in M$ for each $g\in G$. Therefore
$G'=\lg c^g\di g\in G\rg =M.$ In other words, $q=v$, that is
$c^a=c^sa^{t2^v}$ where $t$ is odd.

We now show that $u\ne v$.
Recall that $G' = \lg c\rg\times\lg
a^{2^v}\rg = \lg c\rg\times\lg b^{2^v}\rg$, so $Z_{e-v}=\lg a^{2^v}\rg\times\lg b^{2^v}\rg\le G'$. On the other hand $c=a^{d2^u}b^{-d2^u}$, so $G'\le\lg a^{2^v}\rg\lg b^{2^v}\rg=Z_{e-u}$. Now suppose that $u=v$, so $G'=Z_{e-v}$, with $e-v>0$ since $G$ is nonabelian.
By Lemma~\ref{Z_i}, the subgroup $G'/Z_{e-(v+1)} = Z_{e-v}/Z_{e-(v+1)}$ is central in $G/Z_{e-(v+1)}$. We have seen that $G'=\lg c,c^a\rg$, so $c^aZ_{e-(v+1)} = cZ_{e-(v+1)}$ since $c\in G'$. Thus
$$G'/Z_{e-(v+1)}=\lg c, c^a\rg /Z_{e-(v+1)}=\lg c\rg /Z_{e-(v+1)}$$
is cyclic, contradicting the fact that
$Z_{e-v}/Z_{e-(v+1)} \cong C_2\times C_2$ by Lemma~\ref{Z_i}. Thus $u<v$, completing the proof of (i).

Finally, since $c^a=c^sa^{t2^v}\in G'=\lg c\rg \times \lg a^{2^v}\rg$ we have
$$2^{e-u}=|c|=|c^a|=|c^sa^{t2^v}|=\max\{ |c^s|, |a^{t2^v}|\},$$
with $|a^{t2^v}|=2^{e-v}<2^{e-u}$ since $v>u$, so $|c^s|=|c|$ and hence $s$ must be odd.
\end{proof}

The next result uses the parameter $u$, where $c$ has order $2^{e-u}$, to distinguish between metacyclic and non-metacyclic $n$-isobicyclic groups $G$.

\begin{lemma}\label{structurenew}
Let $(G,a,b)$ be a non-abelian $n$-isobicyclic triple where $n=2^e\ge 4$. With $u$ and $v$ defined as in Lemma~\ref{deri}, the following
statements hold.
\begin{itemize}
\item[{\rm (i)}] If $u\ge 2$ then $G$ is metacyclic, with $2\le u<v=e$ and $G'=\lg c\rg=\lg (ab^{-1})^r\rg\cong C_{2^{e-u}}$.

\item[{\rm (ii)}] If $u<2$ then $G$ is non-metacyclic, with $u=1$, $v=2$ and $G'=\lg a^2b^{-2}\rg\times\lg a^4\rg=\lg a^2b^{-2}\rg\times\lg b^4\rg$, where $\lg a^2b^{-2}\rg\cong C_{2^{e-1}}$  and $\lg a^4\rg \cong \lg b^4\rg \cong C_{2^{e-2}}$.
    
\end{itemize}
In particular, if $G$ is non-metacyclic and $G'$ is cyclic,
then $e=2$ and $G'=\lg a^2b^{-2}\rg\cong C_2$.
\end{lemma}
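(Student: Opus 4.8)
The plan is to exploit Lemma~\ref{general}(iii): $G$ is metacyclic if and only if $\bar G:=G/\Phi(G')K_3(G)$ is metacyclic, where $K_3(G)=[G',G]$. So I would compute $\Phi(G')K_3(G)$ explicitly from Lemma~\ref{deri}, and then decide whether $\bar G$ is metacyclic.

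First I would record how $a$ and $b$ act on the abelian group $G'$. From $c=[b,a]=b^{-1}b^a$ we get $b^a=bc$, and applying the automorphism $\alpha$ (which interchanges $a,b$ and sends $c\mapsto c^{-1}$) gives $a^b=ac^{-1}$; together with the formulas $c^a=c^sa^{t2^v}$, $c^b=c^sb^{-t2^v}$ of Lemma~\ref{deri}(iii) (in the non-cyclic case, $s,t$ odd; in the cyclic case $v=e$ and these reduce to $c^a=c^s$) this determines the conjugates of the generators $c$ and $a^{2^v}$ of $G'$. The key consequences are $[c,a]=c^{s-1}a^{t2^v}$, $[c,b]=c^{s-1}b^{-t2^v}$, $[a^{2^v},a]=1$, together with the fact --- via Lemma~\ref{Z_i}, which places $a^{2^v}$, $b^{2^v}$ and all their $G$-conjugates inside $Z_{e-v}\le G'$, a subgroup all of whose elements have even exponent on the order-$2^{e-u}$ generator $c$ --- that $K_3(G)\subseteq\lg c^2\rg\times\lg a^{2^v}\rg$. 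On the other hand $\lg c^2\rg K_3(G)$ already contains $a^{2^v}$, since $s-1$ is even but $t$ is odd. Hence $\Phi(G')K_3(G)=\lg c^2\rg\times\lg a^{2^v}\rg$, of index $2$ in $G'$, and $\bar G$ is a $2$-generated $2$-group whose derived subgroup $\bar G'$ is central of order $2$; a short order count (using $|G|=2^{2e}$, $\lg a\rg\cap G'=\lg a^{2^v}\rg$ and $u<v$) gives $\bar G/\bar G'\cong G^{\mathrm{ab}}\cong C_{2^v}\times C_{2^u}$.

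Now I would use the elementary classification of $2$-generated $2$-groups $H$ with $|H'|=2$: writing $H'=\lg z\rg$ (automatically central), such an $H$ is metacyclic precisely when $z\in\mho_1(H)$, i.e.\ when $H$ is \emph{not} the minimal central extension of $H/H'$ by $\lg z\rg$ --- the one in which every element has the same order as its image in $H/H'$ --- and in that minimal case $H$ is non-metacyclic unless $H/H'\cong C_2\times C_2$. For our $\bar G$ the exceptional case $H/H'\cong C_2\times C_2$ cannot occur, since $v>u\ge 1$; so metacyclicity of $\bar G$ turns entirely on whether some element of $\bar G$ has larger order than its image in $\bar G/\bar G'$. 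The natural test element is $\overline{ab^{-1}}$: because $\alpha$ forces $\overline{a^{2^u}}$ and $\overline{b^{2^u}}$ to have the same image in $G^{\mathrm{ab}}$, the element $ab^{-1}$ has order dividing $2^u$ modulo $G'$, so $(ab^{-1})^{2^u}=c^{\mu}a^{\lambda 2^v}$ for some integers $\mu,\lambda$, and $\bar G$ is non-metacyclic exactly when $\mu$ is even. I expect the technical heart of the proof to be the evaluation of $\mu\bmod 2$: expanding $(ab^{-1})^{2^u}$ with the conjugation formulas above (using $(ab^{-1})^2=a^2c^{-1}b^{-2}$ as the base step), $\mu$ turns out to be odd precisely when $u\ge 2$. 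This is the main obstacle --- a routine but fiddly computation with the action of $a$ on $G'$ --- and it is also what ultimately fixes the value of $v$ in each case.

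Granting this, the rest is bookkeeping. If $u\ge 2$, then $\bar G$, hence $G$, is metacyclic; metacyclic groups have cyclic derived subgroup, so by Lemma~\ref{deri}(i) we are in the cyclic case $v=e$, $G'=\lg c\rg\cong C_{2^{e-u}}$, and since the metacyclic classification of~\cite{DJKNS1} identifies $G$ with $G_1(e,f)$ (isobicyclic pair $g^r,g^rh$) one checks directly that $\lg c\rg=\lg(ab^{-1})^r\rg$. If $u<2$, then $u\ne0$ (as $c=a^{d2^u}b^{-d2^u}\in\Phi(G)=\lg a^2\rg\lg b^2\rg$ forces $d2^u$ even), so $u=1$; then $\bar G$ is the minimal extension and $G$ is non-metacyclic, and combining the $\mu$-analysis with the bound $K_3(G)\le Z_{e-2}$ (from $G'\le Z_{e-1}$ and Lemma~\ref{Z_i}) forces $v=2$, whence $G'=\lg a^2b^{-2}\rg\times\lg a^4\rg$ with the stated orders, using $\lg c\rg=\lg a^2b^{-2}\rg$ and $\lg a\rg\cap G'=\lg a^4\rg$. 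The final ``in particular'' is then immediate: if $G$ is non-metacyclic with cyclic $G'$, the above forces $u=1$, $v=2$, and cyclicity of $G'$ kills the factor $\lg a^4\rg\cong C_{2^{e-2}}$, so $e=2$ and $G'=\lg a^2b^{-2}\rg\cong C_2$.
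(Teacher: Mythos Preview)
Your approach via Blackburn's criterion is genuinely different from the paper's: rather than splitting into the cases $G'$ cyclic versus non-cyclic and arguing directly (for $u\ge 2$, exhibiting $\langle ab^{-1}\rangle$ as a cyclic normal subgroup with cyclic quotient; for $u=1$ with $G'$ cyclic, assuming a metacyclic presentation and deriving a contradiction from the automorphism $\alpha$), you reduce uniformly to the class-$2$ quotient $\bar G$ with $|\bar G'|=2$. Your identification $\Phi(G')K_3(G)=\langle c^2\rangle\times\langle a^{2^v}\rangle$ is correct, and so is the claim that $\mu$ is odd precisely when $u\ge 2$: from $(ab^{-1})^2=a^2c^{-1}b^{-2}$ one finds $(\overline{ab^{-1}})^2=\bar a^2\bar b^{-2}\,\bar c$ in $\bar G$, while $\bar a^{2^u}\bar b^{-2^u}=\bar c$ (its $d$-th power is $\bar c$ and $d$ is odd), so the extra $\bar c$ cancels exactly when $u=1$ and survives to $(\overline{ab^{-1}})^{2^u}=\bar c$ when $u\ge 2$.

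There are, however, two real gaps. First, your metacyclicity criterion is misstated: in any class-$2$ group $H$ with $H'=\langle z\rangle$ of order $2$ one \emph{always} has $z=[\tilde x,\tilde y]=(\tilde x\tilde y)^2\tilde x^{-2}\tilde y^{-2}\in\mho_1(H)$; the correct condition is that $z$ be an actual square. Moreover, reducing this to the single test element $\overline{ab^{-1}}$ requires first checking that $\bar a$ has order exactly $2^v$ in $\bar G$ (true, since $\langle a\rangle\cap\Phi(G')K_3(G)=\langle a^{2^v}\rangle$, but not stated), so that $\bar c$ cannot be a square via the other generator. Second, and more seriously, your argument for $v=2$ in case~(ii) does not close: the inclusion $a^{2^v}\in\Phi(G')K_3(G)\le Z_{e-2}$ yields only $v\ge 2$, not $v\le 2$. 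The paper obtains the upper bound by a recursion your sketch lacks: since $Z_{e-v}\le\Phi(G')K_3(G)$, the isobicyclic quotient $G/Z_{e-v}$ is again non-metacyclic (Blackburn once more) but now has \emph{cyclic} derived group $G'/Z_{e-v}\cong C_{2^{v-u}}$, and the cyclic-$G'$ analysis then forces $|G/Z_{e-v}|=16$, whence $v=2$. You need either this recursive step or an independent upper bound on $v$. (Minor point: invoking \cite{DJKNS1} for $G'=\langle(ab^{-1})^r\rangle$ is unnecessary and risks circularity; once $\mu$ is odd and $v=e$ you have $(ab^{-1})^{2^u}=c^\mu$, so $\langle(ab^{-1})^r\rangle=\langle c\rangle$ directly since $r=d\cdot 2^u$ with $d$ odd.)
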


\begin{proof}
By Lemma~\ref{deri}, $G'=\lg c\rg\times \lg a^{2^v}\rg$ where
$c=[b,a]=a^rb^{-r}$ with $r=d2^u$ for an odd integer $d$ and some integers $u$ and $v$ such that $0\le u<v\le e$.

(a) We first consider the case where $G'$ is cyclic, so that $v=e$. By Lemma~\ref{deri}, $G'=\lg
c\rg$ and hence $c^a=c^s$ for some $s$, which must be odd. By
applying $\alpha$, which inverts $c$, we also have $c^b=c^s$. It follows that $c^{ab^{-1}}=c$. Moreover, $[c,a]=[c,b]=c^{s-1}\in\lg c^2\rg$,
which means that the image $c\lg c^2\rg$ of $c$ in $G/\lg c^2\rg $ is a central involution in that group. (As a characteristic subgroup of $\lg c\rg=G'$, $\lg c^2\rg$ is normal in $G$.) Now we have
$$
(ab^{-1})^2=ab^{-1}ab^{-1}=a^2(a^{-1}ba)^{-1}b^{-1}
=a^2(bc)^{-1}b^{-1}=a^2c^{-1}b^{-2}\equiv  a^2b^{-2}c\ (\mod \lg c^2\rg )
$$
\f  and
$$a^2b^2=abac^{-1}b=bac^{-1}ac^{-1}b\equiv baab   \equiv
babac^{-1}\equiv b^2ac^{-1}ac^{-1}\equiv b^2a^2\ (\mod \lg c^2\rg
).$$

(a1) Suppose that $u\ge 2$, as in (i). Then $r/2$ is even, so
$$\begin{array}{lll}(ab^{-1})^r&=&((ab^{-1})^2)^{r/2}\equiv
(a^2b^{-2}c)^{r/2}\equiv (a^2b^{-2})^{r/2}c^{r/2}\\
& \equiv & (a^2b^{-2})^{r/2}\equiv a^rb^{-r} \equiv c\ (\mod \lg c^2\rg ).\end{array}
$$
Thus $(ab^{-1})^r$ is an odd power of $c$, so $\lg (ab^{-1})^r\rg=\lg c\rg$ and
$|(ab^{-1})^r|=|c|=2^{e-u}$ by Lemma~\ref{deri}.
Since $G=\lg a,b\rg$, the quotient group $G/G'=G/\lg c\rg$ is generated by the images $\overline a$ and $\overline{ab^{-1}}$ of $a$ and $ab^{-1}$ in this group. Now $\lg c\rg\cap A=1$ by Lemma~\ref{deri}(ii), so $\overline a$ has order $|a|=2^e$. Since $(ab^{-1})^r\in\lg c\rg$, we see that $\overline{ab^{-1}}$ has order dividing $r$, and hence dividing $2^u$. But $G/\lg c\rg$ is an abelian group of order $|G|/|\lg c\rg|=2^{2e}/2^{e-u}=2^{e+u}$, so $\overline{ab^{-1}}$ must have order $2^u$ with $G/G'=\lg \overline a\rg\times\lg\overline{ab^{-1}}\rg\cong C_{2^e}\times C_{2^u}$.

Since $(ab^{-1})^r$ is an odd power of $c$ we have $\lg (ab^{-1})^r\rg = \lg c\rg$, so the cyclic subgroup $H:=\lg ab^{-1}\rg$ contains $G'$ with index $2^u$. Since the image of $H$ in $G/G'$ has order $2^u$, and $G'$ has order $2^{e-u}$, it follows that $H$ has order $2^e$. Since $H$ contains $G'$ it is a normal subgroup of $G$. Thus $AH=HA$ is a subgroup of $G$, and since it contains both $a$ and $b$ we have $G=AH$, so $G$ is metacyclic. This proves (i) in the case where $G'$ is cyclic.

(a2) Now suppose that $G'$ is cyclic and $u=0$. Then $G'=\lg c\rg$ has order $2^{e-u}=2^e$ by Lemma~\ref{deri}(ii). Since $G'\cap A=1$ by Lemma~\ref{deri}(ii) we have $|G'A|=|G'||A|=e^{2e}=|G|$, so $G=G'A$ and $G/G'$ is cyclic. But then $G/\Phi$ is cyclic and hence so is $G$, a contradiction. Hence $u\ne 0$.

We therefore have $u=1$, so $r=2d$, giving $c=a^{2d}b^{-2d}$, where $d$ is odd. By Lemma~\ref{deri}(ii), $|c|=2^{e-1}$, and since $G=\lg a, ab^{-1}\rg$ we have $G/G'=G/\lg c\rg \cong C_{2^e}\times C_2$.
 
(a3) Suppose first that $G$ is metacyclic. Huppert  gives the general form for a metacyclic $p$-group in~\cite[III.11.2]{Hup}; taking $p=2$ we have
$$G=\lg g, h \mid h^{2^i}=1,\ g^{2^j}=h^{2^k},\ h^g=h^q\rg$$
with $0\le k\le i$, $q^{2^j}\equiv 1\pmod{2^i}$ and $2^k(q-1)\equiv 0 \pmod{2^i}$. Thus $G$ has a normal subgroup $H=\lg h\rg\cong C_{2^i}$ with $G/H\cong C_{2^j}$, so $|G|=2^{i+j}$ and hence $i+j=2e$. Since $|h|=2^i$ and $G$ has exponent $n=2^e$ we have $i\le e$ and hence $j\ge e$. Since $|g|=2^{i+j-k}$ we have $i+j-k\le e$ and hence $k\ge e$. But $k\le i$, so $i=j=k=e$. Thus
$$G=\lg g, h \mid g^n=h^n=1,\ h^g=h^q\rg$$
for some $q$. Now $G'$, being cyclic, is generated by $[h,g]=h^{q-1}$. We are assuming that $u=1$, so $G'\cong C_{2^{e-1}}$ and hence $q\equiv 3\pmod{4}$.

Each element of $G$ has the form $g^ih^j$ for a unique pair $i, j\in{\mathbb Z}_n$. By using the relation $(h^J)^{g^i}=h^{jq^i}$, we obtain
$$\begin{array}{ll}
(g^ih^j)^m&=g^{im}(h^j)^{g^{i(m-1)}}(h^j)^{g^{i(m-2)}}\ldots(h^j)^{g^i}h^j\\
&=g^{im}h^{j(q^{i(m-1)+q^{i(m-2)}+\cdots+q^i+1)}}\\
&=g^{im}h^{j(q^{im}-1)/(q^i-1)}
\end{array}$$
for all $m\ge 1$. Let $m=n/2=2^{e-1}$. If $i$ is even then $g^{im}=1$ and $q^i\equiv 1\pmod{4}$; if $2^k\parallel q^i-1$ then $2^{k+e-1}\parallel q^{im}-1$, so $2^{e-1}\parallel (q^{im}-1)/(q^i-1)$ and hence $(g^ih^j)^m=h^{jm}$. If $i$ is odd then $g^{im}=g^m$ and $q^i\equiv 3\pmod{4}$; if $2^k\parallel q^i+1$ (so $k\ge 2$) then $2^{k+e-1}\parallel q^{im}-1$, and $2\parallel q^i-1$, so $2^e\mid (q^{im}-1)/(q^i-1)$ and $(g^ih^j)^m=g^m$. Thus
$$
(g^ih^j)^m=\left\{
\begin{array}{lll}& h^{jm}, \quad &{\rm for\ }\,  i  {\rm\  \, even },\\
&g^m, \quad  &{\rm for\ }\,  i  {\rm\ odd },\\
\end{array}\right.
$$
so $g^ih^j$ has order $n$ if and only if $i$ or $j$ is odd, that is, $g^ih^j\not\in\Phi=\lg g^2, h^2\rg$.

If $a$ and $b$ are an isobicyclic pair for $G$ then they have order $n$, so they are not elements of $\Phi$. Since they generate $G$, they are in different cosets of $\Phi$, namely $g\Phi$, $h\Phi$ or $gh\Phi$. The subgroups $A=\lg a\rg$ and $B=\lg b\rg$ are disjoint, so $a^m\ne b^m$; hence these two cosets cannot be $g\Phi$ and $gh\Phi$ (otherwise $a^m=g^m=b^m$), so one of them must be $h\Phi$, say $a\in h\Phi$. Then $A\Phi=H\Phi$, so $H^{\alpha}\Phi=B\Phi=g\Phi$ or $gh\Phi$, giving $(h^{\alpha})^m=g^m\ne h^m$ and hence $H^{\alpha}\cap H=1$. Since $H$ is a normal subgroup of $G$, so is $H^{\alpha}$. Hence $G=H^{\alpha}\times H$, which is abelian, contradicting the assumption that $G'\cong C_{2^{e-1}}$. Thus $G$ cannot be metacyclic.

(a4) Now suppose that $G$ is non-metacyclic, with $G'$ cyclic and $u=1$ as before. We consider the subgroup $N:=\lg c,ab^{-1}\rg $ of $G$; this is abelian since $c^{ab^{-1}}=c$, and it is normal in $G$ since it contains $G'=\lg c\rg$. Note that $N$ is the preimage in $G$ of
$\lg\overline{ab^{-1}}\rg\le G/G'$. In the abelian group $G/G' = G/\lg c\rg $ we have
$(\overline{ab^{-1}})^{2d}=\overline {a^{2d}b^{-2d}}=\overline
c=\overline 1$, which means that $\overline{ab^{-1}}$ is of
order 2. Since $|c|=2^{e-1}$, we have $|N|=2^e$. Since $\lg
N, a\rg =\lg a, b\rg =G$, we deduce that $G=N\rtimes \lg a\rg
$. Since $G$ is not metacyclic, $N$ can not be cyclic and so
$N\cong C_{2^{e-1}}\times C_2.$  Let $c'$ be an involution of
$N$ different from $c^{2^{e-2}}$, so that $N=\lg c\rg\times\lg
c'\rg $. Then the conjugacy action of $a$ on $N$ is defined by
$c^a=c^s$ and $(c')^a=c^{j2^{e-2}}c'$, where $j=0$ or 1, and
$s$ is odd. Now $G=\lg c, c', a\rg$ with $[c,c']=1$, so $G'=\lg [c,a]^g, [c',a]^g\di g\in G\rg
\le \lg c^2, c^{j2^{e-2}}\rg $. Since $G'=\lg c\rg $, we see
that $j2^{e-2}$ must be odd, so $j=1$ and $e=2$, giving $|G|=16$. Since $v=e$ we have $v=2$, and we have proved (i) in the case where $G'$ is cyclic. (Note that $a^4=b^4=1$ in this case.)

(b) We now consider the case where $G'$ is not cyclic, that is, $v<e$. This immediately implies that $G$ is not metacyclic.
Recall that $u<v$. Since  $G'=\lg c\rg \times \lg a^{2^v}\rg $,
we have $\Phi(G')=\lg c^2\rg $$\times \lg a^{2^{v+1}}\rg $.
Moreover, by Lemma~\ref{deri}(iii) we have $c^a=c^sa^{t2^v}$ for
some odd integers $s$ and $t$. Then $a^{t2^v}=c^{-s+1}[c, a]\in
L:=\Phi (G')K_3(G)$, which implies that $a^{2^v}\in L$.
Since $L$ is a characteristic subgroup of $G$ it also contains $b^{2^v}=(a^{2^v})^{\alpha}$, and hence it contains the subgroup
$Z_{e-v}=\lg a^{2^{e-v}}\rg\lg b^{2^{e-v}}\rg$.

Suppose that $G/Z_{e-v}$ is metacyclic. Since $G/L\cong
(G/Z_{e-v})/(L/Z_{e-v})$, it follows that $G/L$ is metacyclic. Then
Lemma~\ref{general}(iii) implies that $G$ is metacyclic, which is a contradiction. Therefore $G/Z_{e-v}$ is non-metacyclic.

Now $G/Z_{e-v}$ is an isobicyclic $2$-group. Since it is non-metacyclic, and its derived group $(G/Z_{e-v})'=G'/Z_{e-v}\cong C_{2^{v-u}}$ is cyclic, it follows from part~(a2) of this proof that  $G/Z_{e-v}$
has order $16$, with
$|G'/Z_{e-v}|=2$. Thus $a^4\in Z_{e-v}=\lg a^{2^v}\rg\lg
b^{2^v}\rg$, so $v=2$. Since $G'/Z_{e-v}\cong C_{2^{v-u}}$
and $|G'/Z_{e-v}|=2$, we deduce that $v-u=1$, so
$u=1$. We have $G' = \lg c\rg \times \lg a^{2^v}\rg = \lg c\rg \times \lg b^{2^v}\rg$ with $c=a^{d2^u}b^{-d2^u}=a^{2d}b^{-2d}$ for some odd $d$, so $G' = \lg a^2b^{-2}\rg \times \lg a^4\rg = \lg a^2b^{-2}\rg \times \lg b^4\rg$, with first and second factors cyclic of orders $2^{e-1}$ and $2^{e-2}$ as required for (ii).

The final statement in the Lemma is an immediate consequence of (i) and (ii).
\end{proof}

\section{Non-metacyclic isobicyclic
$2$-groups}\label{sec:nonmeta}

The following theorem characterises non-metacyclic isobicyclic
$2$-groups.

\begin{theorem}\label{thm:nonmetachar}
Let $(G,a,b)$ be a non-metacyclic
$n$-isobicyclic triple with $n=2^e\ge 4$.

\noindent{\rm(i)} If $e=2$ then
$$\begin{array}{ll}
G&=\lg a, b\di a^4=b^4=[a^2, b]=[b^2, a]=1, [b,a]=a^2b^2 \rg\\
&\\
&\cong G_2(2;0,0).
\end{array}$$
\noindent{\rm(ii)} If $e\ge 3$ then
$$\begin{array}{ll}
G &= \lg  a, b\di  a^n=b^n=[b^2, a^2]=1,\,
[b,a]=a^2b^{-2}(a^{n/2}b^{n/2})^k,\\
& \hskip 15mm (b^2)^a=b^{-2}(a^{n/2}b^{n/2})^l,\,
(a^2)^b=a^{-2}(a^{n/2}b^{n/2})^l\rg\\
&\\
&\cong G_2(e;k,l) \end{array}$$
where $k,l\in\{0,1\}$.
\end{theorem}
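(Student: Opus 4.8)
The plan is, in each case, to exhibit a generating pair $\tilde a,\tilde b$ of $G$ satisfying the defining relations of the relevant group $G_2(e;k,l)$. Writing $\tilde z=\tilde a^{m}\tilde b^{m}$, this means checking
$$\tilde a^{n}=\tilde b^{n}=[\tilde b^{2},\tilde a^{2}]=1,\qquad [\tilde b,\tilde a]=\tilde a^{2}\tilde b^{-2}\tilde z^{k},\qquad (\tilde b^{2})^{\tilde a}=\tilde b^{-2}\tilde z^{l},\qquad (\tilde a^{2})^{\tilde b}=\tilde a^{-2}\tilde z^{l}$$
for some $k,l\in\{0,1\}$. Any pair with $\tilde a\in\Phi a$ and $\tilde b\in\Phi b$ generates $G$ (Burnside basis theorem, as $G/\Phi\cong C_{2}\times C_{2}$), so such a pair gives an epimorphism $G_2(e;k,l)\twoheadrightarrow G$ via $a_1\mapsto\tilde a$, $b_1\mapsto\tilde b$; since $|G|=n^{2}=2^{2e}$ (as $G$ is $n$-isobicyclic) and $|G_2(e;k,l)|=2^{2e}$ as well (Lemma~\ref{G3}(iv)), this epimorphism is an isomorphism. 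So everything reduces to producing such a pair and verifying the relations.

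For $e=2$, Lemma~\ref{structurenew} gives $u=1$, $v=2=e$, so $G'=\lg a^{2}b^{-2}\rg\cong C_{2}$ and $\Phi(G)=\mho_{1}(G)=Z_{1}$ has order $4$. By Lemma~\ref{Z_i}, $Z_{1}\le Z(G)$; since $G$ is non-abelian, $G/Z(G)$ is not cyclic, so $Z(G)=Z_{1}=\Phi(G)$. Thus $a^{2}$ and $b^{2}$ are central, so $[a^{2},b]=[b^{2},a]=1$, and $a^{4}=b^{4}=1$; moreover $[b,a]=a^{2d}b^{-2d}=a^{2}b^{2}$ by Lemma~\ref{deri}, since $d$ is odd and $a,b$ have order $4$. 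Hence $a,b$ satisfy the relations of $G_2(2;0,0)$ in~(3), and the order comparison gives $G\cong G_2(2;0,0)$.

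For $e\ge 3$, Lemma~\ref{structurenew}(ii) gives $u=1$, $v=2$, $c=[b,a]=a^{2d}b^{-2d}$ with $d$ odd, and
$$G'=\lg a^{2}b^{-2}\rg\times\lg a^{4}\rg\cong C_{2^{e-1}}\times C_{2^{e-2}},\qquad \Phi(G)=Z_{e-1}=\lg a^{2}\rg\lg b^{2}\rg ,$$
while Lemma~\ref{deri} also supplies $[a^{2},b^{4}]=[b^{2},a^{4}]=1$ and, since $G'$ is non-cyclic, $c^{a}=c^{s}a^{4t}$ and $c^{b}=c^{s}b^{-4t}$ with $s,t$ odd. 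The key step --- and, I expect, the main obstacle --- is to prove that \emph{$\Phi(G)$ is abelian}, equivalently $[a^{2},b^{2}]=1$, so that $\Phi(G)=\lg a^{2}\rg\times\lg b^{2}\rg\cong C_{m}\times C_{m}$. My approach: first, $G/Z_{e-2}$ is a non-metacyclic $4$-isobicyclic triple --- it has order $16$, the images of $a$ and $b$ have order $4$ and form a disjoint pair transposed by the image of $\alpha$, and non-metacyclicity follows exactly as in the proof of Lemma~\ref{structurenew}(b), using $Z_{e-2}\le\Phi(G')K_{3}(G)$ and Lemma~\ref{general}(iii); hence by the case $e=2$ above, $G/Z_{e-2}\cong G_2(2;0,0)$, whose Frattini subgroup is $C_{2}\times C_{2}$, so $[\Phi(G),\Phi(G)]\le Z_{e-2}$. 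Second, conjugating $a^{2}$ twice by $b^{2}$ and using $[a^{2},b^{4}]=1$ forces $[a^{2},b^{2}]^{2}=1$, so $[a^{2},b^{2}]$ is an involution of the abelian group $Z_{e-2}=\lg a^{4}\rg\times\lg b^{4}\rg$, hence lies in $Z_{1}=\{1,a^{m},b^{m},z\}$ where $z=a^{m}b^{m}$; and since $[a^{2},b^{2}]^{\alpha}=[b^{2},a^{2}]=[a^{2},b^{2}]^{-1}$, the only possibilities are $1$ and $z$. Finally one rules out $[a^{2},b^{2}]=z$: for $e=3$ this is immediate, since $[a^{2},b^{2}]=z$ would give $(a^{2}b^{-2})^{2}=a^{4}b^{-4}z=z^{2}=1$, contradicting $|a^{2}b^{-2}|=2^{e-1}$; for $e\ge 4$ it requires pushing the relations $c^{a}=c^{s}a^{4t}$, $c^{b}=c^{s}b^{-4t}$ and $(b^{2})^{a}=(bc)^{2}$ a little further --- or, equivalently, exhibiting an isobicyclic pair $\tilde a\in\Phi a$, $\tilde b\in\Phi b$ whose squares commute --- and this is the portion where careful bookkeeping is needed.

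Granting that $\Phi(G)$ is abelian, the remaining relations come out by direct computation. Since everything in $\Phi$ commutes, $c=a^{2d}b^{-2d}=(a^{2}b^{-2})^{d}$ is an odd power of $a^{2}b^{-2}$, and as $|c|=2^{e-1}=|a^{2}b^{-2}|$ we get $\lg c\rg=\lg a^{2}b^{-2}\rg$, say $c=(a^{2}b^{-2})^{p}$ with $p$ odd, while $z=(a^{2}b^{-2})^{m/2}$. Feeding $c=(a^{2}b^{-2})^{p}$ into $c^{a}=c^{s}a^{4t}$ --- computing $(a^{2}b^{-2})^{a}$ from $(b^{2})^{a}=(bc)^{2}=b^{2-4t}c^{s+1}$ --- and comparing $a$- and $b$-exponents forces $p\equiv 1\pmod{2^{e-2}}$, whence $[b,a]=(a^{2}b^{-2})^{p}=a^{2}b^{-2}z^{k}$ with $k=(p-1)/2^{e-2}\bmod 2\in\{0,1\}$. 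A parallel computation, together with the consistency imposed by $a^{n}=b^{n}=1$ (or, if needed, a further replacement of $(a,b)$ by $(a^{r},b^{r})$ for a suitable odd $r$, which preserves the isobicyclic structure and keeps $\tilde z=z$), normalises $s$ and $t$ so that $(b^{2})^{a}=b^{-2}z^{l}$ for some $l\in\{0,1\}$, and then $(a^{2})^{b}=a^{-2}z^{l}$ follows by applying $\alpha$. Hence the pair satisfies all the defining relations of $G_2(e;k,l)$, and the order comparison from the first paragraph completes the proof.
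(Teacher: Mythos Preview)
Your overall strategy---produce generators satisfying the defining relations of $G_2(e;k,l)$ and then compare orders---is the same as the paper's, and your treatment of the case $e=2$ is fine. The genuine gap is exactly where you flag it: for $e\ge 4$ you do not rule out $[a^2,b^2]=z$. Your argument for $e=3$ uses that $(a^2b^{-2})^2=a^4b^{-4}z$ would then have order strictly less than $2^{e-2}$; but for $e\ge 4$ one has $z=a^mb^m=a^mb^{-m}=(a^4b^{-4})^{m/4}$ inside the abelian group $Z_{e-2}=\lg a^4\rg\times\lg b^4\rg$, so $(a^2b^{-2})^2=(a^4b^{-4})^{1+2^{e-3}}$, which is an \emph{odd} power of $a^4b^{-4}$ and hence still has order $2^{e-2}$. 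Thus no contradiction arises and the method does not extend. Your alternative suggestion---replacing $(a,b)$ by some $(\tilde a,\tilde b)\in\Phi a\times\Phi b$ whose squares commute---presupposes precisely the structural information you are trying to establish, so it is circular as stated.

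The paper avoids this difficulty by a different and more direct route to $[a^2,b^2]=1$. From $[a^4,b^2]=[a^2,b^4]=1$ (your Lemma~\ref{deri} input) one checks by hand that $c^{a^2b^{-2}}=c$, whence $c^{a^2}=c^{b^2}$. Computing each side from $c^a=c^sa^{4t}$ and $c^b=c^sb^{-4t}$ gives $c^{a^2}=c^{s^2}a^{4t(s+1)}$ and $c^{b^2}=c^{s^2}b^{-4t(s+1)}$; equating and using $A\cap B=1$ forces $a^{4t(s+1)}=1$, hence (as $t$ is odd) $s\equiv -1\pmod{2^{e-2}}$. Then $s^2\equiv 1\pmod{2^{e-1}}=|c|$, so $c^{a^2}=c$; since $c=a^{2d}b^{-2d}$ with $d$ odd, this yields $[a^2,b^2]=1$ immediately, for all $e\ge 3$ at once. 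With $\Phi$ abelian and $s=-1+l\,2^{e-2}$ in hand, the paper then computes $c^b=(a^{2d}b^{-2d})^b$ two ways to obtain $d\equiv t\equiv 1\pmod{2^{e-2}}$, which gives the remaining relations with $k=(d-1)/2^{e-2}\in\{0,1\}$ directly---no change of generating pair is needed. Your post-$[a^2,b^2]=1$ sketch (``a parallel computation'', ``if needed a further replacement of $(a,b)$ by $(a^r,b^r)$'') is pointing in the right direction but is considerably vaguer than what the paper actually does.
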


\begin{proof}
By Lemma~\ref{structurenew}(ii) we see that  $v=2$ and $u=1$, so
$$G'=\lg c\rg\times\lg a^4\rg=\lg a^2b^{-2}\rg \times \lg a^4\rg \cong
C_{2^{e-1}}\times C_{2^{e-2}}$$
where $c=[b,a]=a^{2d}b^{-2d}$ for some
odd $d$. By Lemma~\ref{deri}(iii) we have
\begin{equation}
c^a=c^sa^{4t} \quad\text{and}\quad c^b=c^sb^{-4t}
\end{equation}
for some odd $s$ and $t$. We will determine $d, s$ and $t$ up to group automorphisms.

By Lemma~\ref{structurenew}(ii), $[a^4,b^2]=[a^2, b^4]=1$; since $d-1$ is even, this implies that
$$\begin{array}{lll}
c^{a^2b^{-2}}&=&b^2a^{-2}a^{2d}b^{-2d}a^2b^{-2}
=b^2a^{2(d-1)}b^{-2d}a^2b^{-2}\\
&=&a^{2(d-1)}b^{-2(d-1)}a^2b^{-2}
=a^{2d}b^{-2d}=c,
\end{array}$$
\f so  $c^{a^2}=c^{b^2}$. Since
$$c^{a^2}=(c^a)^a=(c^sa^{4t})^a=(c^a)^sa^{4t}=c^{s^2}a^{4t(s+1)}$$
\f and
$$c^{b^2}=(c^b)^b=(c^sb^{-4t})^b=(c^b)^sb^{-4t}=c^{s^2}b^{-4t(s+1)},$$
we see that $a^{4t(s+1)}=b^{-4t(s+1)}$. However, $A\cap B=1$,
so $a^{4t(1+s)}=1$ and hence $c^{a^2}=c^{s^2}$. Since
$t$ is odd, we have $s\equiv -1\ (\mod 2^{e-2})$. In what
follows, we set $s=-1+l2^{e-2}$; since $|c|=2^{e-1}$ we can assume that $l=0$ or $1$. Then $s^2\equiv 1\ (\mod
2^{e-1})$, and because $|c|=2^{e-1}$ we have
$c^{a^2}=c^{s^2}=c$. Thus $a^2$ commutes with $c=a^{2d}b^{-2d}$, and hence with $b^{2d}$; since $d$ is odd we therefore have
\begin{equation}
[a^2, b^2]=1.
\end{equation}

\f Using equation~(6) we see that
\begin{equation}
\begin{array}{lll}
b^{-1}a^{2j}b&=&((a^b)^2)^j=((ac^{-1})^2)^j=(ac^{-1}ac^{-1})^j=(a^2(c^a)^{-1}c^{-1})^j\\
&=&(a^2a^{-4t}c^{-(s+1)})^j=a^{2(1-2t)j}c^{-j(s+1)}
\end{array}
\end{equation}
for each positive integer $j$. By taking
$j=2^{e-2}$ we deduce that the involution $a^{2^{e-1}}$ is central in $G$, and the same holds for $b^{2^{e-1}}$. In what
follows we set $z=a^{2^{e-1}}b^{2^{e-1}}=c^{2^{e-2}}$.
By equations (6) and (8) we have
\begin{equation}
c^a=c^{-1}a^{4t}z^l, \quad c^b=c^{-1}b^{-4t}z^l,
  \quad b^{-1}a^{2j}b=a^{2(1-2t)j}z^{lj}.
\end{equation}
\f From $c^b=(a^{2d}b^{-2d})^b$ and equation~(9) we
have
$$
a^{-2d}b^{2d}b^{-4t}z^l=a^{2(1-2t)d}z^{ld}b^{-2d},$$
\f so
$$a^{4(1-t)d}=b^{4(d-t)}$$
\f and hence
$$(1-t)d\equiv d-t\equiv 0\ (\mod 2^{e-2}).$$
\f Solving these equations gives
$$d\equiv t\equiv 1\ (\mod 2^{e-2}).$$
\f Writing $d=1+k2^{e-2}$ where $k=0$ or $1$, and using $b^{2^e}=1$, we see from these two congruences that the relations $c=a^{2d}b^{-2d}$,
$c^a=c^sa^{4t}$ and $c^b=c^sb^{-4t}$ can be respectively
rewritten as
\begin{equation}
[b, a]=a^{2+k2^{e-1}}b^{-2+k2^{e-1}},\,
(b^2)^a=a^{l2^{e-1}}b^{-2+l2^{e-1}},\,
(a^2)^b=a^{-2+l2^{e-1}}b^{l2^{e-1}}
\end{equation}
 \f where $k,
l\in\{0, 1\}$. By combining the relations in (10) with the fact
that $a^{2^e}=b^{2^e}=[a^2, b^2]=1$ we see that $G$ satisfies all the defining relations of $G_2(e;k,l)$ in (2). Thus $G$ is an epimorphic image of $G_2(e; k,l)$, and since these two groups have the same order, they are isomorphic.

If $e=2$ then $c=a^2b^{-2}=a^2b^2$ is an involution
commuting with both $a$ and $b$, so $c$ is central in $G$. Thus
$[a,b^2]=[b,a^2]=1$ and it follows that $G\cong G_2(2;0,0)$.
\end{proof}

Recall that Theorem~\ref{BJ87.19}, of Berkovich and Janko, states that a $2$-generator $2$-group with exactly one non-metacyclic maximal subgroup, and with a derived group isomorphic to $C_{2^r}\times C_{2^{r+1}}$ for some $r\ge 2$, has a presentation of the form~(4). Here we consider a subset of these groups, namely those for which $x^2$ and $w$ are powers of the central involution $z$. For each $r\ge 2$, and for each pair $k, l\in\{0, 1\}$, let $G=G(k,l)$ denote the group given by the presentation~(4) with $x^2=z^k$ and $w=z^l$, that is,
\begin{equation}
\begin{array}{ll}
G(k,l)=\lg a,x \mid
    &a^{2^{r+2}}=1, \ [a,x]=v, \ [v,a]=b, \ v^{2^{r+1}}=b^{2^r}=[v,b]=1,\\
    &v^{2^r}=z, \ b^{2^{r-1}}=u, \ x^2=z^k, \ b^x=b^{-1},\\
    &v^x=v^{-1}, \ b^a=b^{-1}, \ a^4=v^{-2}b^{-1}w, \ w=z^l\, \rg.
\end{array}
\end{equation}
Our aim is to show that $G$ is isomorphic to the group $G_2=G_2(e;k,l)$, where $e=r+2$. 
To avoid notational confusion, let us present $G_2$ as
\begin{equation}
\begin{array}{ll}
G_2(e;k,l)=\lg \ a_1, b_1 \di& a_1^{2^e}=b_1^{2^e}=[b_1^2, a_1^2]=1,\
[b_1,a_1]=a_1^2b_1^{-2}(a_1^{2^{e-1}}b_1^{2^{e-1}})^k,\\\
&(b_1^2)^{a_1}=b_1^{-2}(a_1^{2^{e-1}}b_1^{2^{e-1}})^l,\
(a_1^2)^{b_1}=a_1^{-2}(a_1^{2^{e-1}}b_1^{2^{e-1}})^l\ \rg .
\end{array}
\end{equation}

\begin{theorem}\label{isomorphism} For each $e=r+2\ge 4$ there is an isomorphism from $G(k,l)$ to $G_2(e;k,l)$ sending the generators $a$ and $x$ of $G(k,l)$ to $a_1$ and $a_1^{-1}b_1$ in $G_2(e;k,l)$.
\end{theorem}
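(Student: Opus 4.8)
\textbf{The plan} is a standard von Dyck argument followed by an order count. I would define a homomorphism $\varphi$ from the free group on $\{a,x\}$ to $G_2=G_2(e;k,l)$ by $a\mapsto a_1$ and $x\mapsto a_1^{-1}b_1$, check that it sends every defining relator of the presentation~(11) of $G(k,l)$ to the identity, and conclude that it factors through a homomorphism $\bar\varphi\colon G(k,l)\to G_2$. This $\bar\varphi$ is surjective, since $a_1=\bar\varphi(a)$ and $b_1=\bar\varphi(ax)$ both lie in its image. To finish I would invoke $|G(k,l)|=2^{2r+4}$ from Theorem~\ref{BJ87.19} and note that $|G_2|=2^{2e}=2^{2r+4}$ (because $G_2/\Phi\cong C_2\times C_2$ with $\Phi\cong C_m\times C_m$), so that $\bar\varphi$ is a bijection, hence an isomorphism, with the required effect on generators.

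The first concrete step is to translate the auxiliary abbreviations occurring in~(11) --- $v=[a,x]$, $b=[v,a]$, $z=v^{2^r}$, $u=b^{2^{r-1}}$, $w=z^l$ --- into words in $a_1,b_1$ under $\varphi$. Writing $c_1:=[b_1,a_1]=a_1^2b_1^{-2}z_1^k$ as in Lemma~\ref{G3}, a straightforward cancellation gives $\varphi(v)=[a_1,a_1^{-1}b_1]=[a_1,b_1]=c_1^{-1}$. Then, using $(b_1^2)^{a_1}=b_1^{-2}z_1^l$ and $[a_1^2,b_1^2]=1$, one computes $c_1^{a_1}=a_1^2b_1^2z_1^{k+l}$, whence $\varphi(b)=[c_1^{-1},a_1]=c_1(c_1^{a_1})^{-1}=b_1^{-4}z_1^l$. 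Since $|c_1|=m=2^{e-1}$ and $c_1^{2^{e-2}}=a_1^mb_1^{-m}=z_1$ (Lemma~\ref{G3}(v) and its proof), we get $\varphi(z)=c_1^{-2^r}=z_1$ and $\varphi(w)=z_1^l$.

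With these values available, each genuine relator of~(11) --- $a^{2^{r+2}}$, $v^{2^{r+1}}$, $b^{2^r}$, $[v,b]$, $x^2z^{-k}$, $b^xb$, $v^xv$, $b^ab$, and $a^4w^{-1}bv^2$ --- is checked by a short computation inside $G_2$, each resting on the same short list of facts from Lemma~\ref{G3}: $G_2'$ is abelian, $z_1$ is a central involution, $[a_1^2,b_1^2]=1$, $|c_1|=2^{e-1}$ with $c_1^{2^{e-2}}=z_1$, and the conjugation rules $c_1^{a_1}=a_1^2b_1^2z_1^{k+l}$, $(b_1^2)^{a_1}=b_1^{-2}z_1^l$, $(a_1^2)^{b_1}=a_1^{-2}z_1^l$ (and, since $a_1^2$ and $b_1^2$ centralise both $c_1$ and each other, the same formulas with $a_1^{-1},b_1^{-1}$ in place of $a_1,b_1$). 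For instance $\varphi(a)^4=a_1^4=c_1^2b_1^4=\varphi(v)^{-2}\varphi(b)^{-1}\varphi(w)$, and $\varphi(x)^2=(a_1^{-1}b_1)^2=z_1^k$ by Lemma~\ref{G3}(ii).

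The hard part is purely bookkeeping: getting $\varphi(v)=c_1^{-1}$ and $\varphi(b)=b_1^{-4}z_1^l$ correct, which means fixing one commutator convention once and for all and then tracking the exponents of the central involution $z_1$ consistently through the iterated conjugations. Once that is done, every relation collapses to abelian arithmetic in $\Phi=\langle a_1^2\rangle\times\langle b_1^2\rangle$ together with the three conjugation formulas above. (One could instead sidestep the order count by writing down the inverse assignment $a_1\mapsto a$, $b_1\mapsto ax$ and checking the three defining relations of $G_2$ inside $G(k,l)$; but that requires developing the commutator calculus of~(11) and is no shorter.)
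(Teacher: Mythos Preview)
Your approach is essentially the paper's: define $\bar\varphi$ by $a\mapsto a_1$, $x\mapsto a_1^{-1}b_1$, verify the relators of~(11) in $G_2$, note surjectivity, and finish with an order comparison. Your computations of $\varphi(v)=c_1^{-1}$, $\varphi(b)=b_1^{-4}z_1^{l}$, $\varphi(z)=z_1$ match the paper's, and the relator checks are routine as you say.

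The one genuine gap is your invocation of Theorem~\ref{BJ87.19} for $|G(k,l)|=2^{2r+4}$. As stated, that theorem says: \emph{if} $G$ is a $2$-generator $2$-group with exactly one non-metacyclic maximal subgroup and $G'\cong C_{2^r}\times C_{2^{r+1}}$, \emph{then} $G$ admits a presentation of the form~(4) and has order $2^{2r+4}$. It does not assert that the abstract group defined by each of the sixteen presentations~(4) has order $2^{2r+4}$; a priori some choices of $x^2,w$ might force extra collapses. Since $G(k,l)$ is defined in the paper \emph{as} the abstract group presented by~(11), you cannot read off its order from Theorem~\ref{BJ87.19} without first verifying that $G(k,l)$ satisfies the hypotheses of that theorem --- which is essentially what you are trying to prove. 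The paper makes this point explicitly in the sentence following the proof: Theorem~\ref{isomorphism} \emph{confirms} the order assertion, which ``is not immediately apparent from the presentation~(11)''.

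The paper therefore supplies the missing inequality $|G(k,l)|\le 2^{2e}$ by a short direct argument from the relations: $\langle v,b\rangle$ has order at most $2^{2r+1}$ (from $v^{2^{r+1}}=b^{2^r}=[v,b]=1$); it is normal of index at most~$2$ in $\langle v,b,x\rangle$ (from $x^2=v^{2^rk}$, $v^x=v^{-1}$, $b^x=b^{-1}$); and $\langle v,b,x\rangle$ is normal of index at most~$4$ in $G$ (from $a^4\in\langle v,b\rangle$, $b^a=b^{-1}$, $v^a=vb$, $x^a=xv^{-1}$). Combined with your surjection onto $G_2$ of order $2^{2e}$, this gives equality. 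Inserting these three lines in place of the citation to Theorem~\ref{BJ87.19} makes your proof self-contained and identical in substance to the paper's.
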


\begin{proof} 
We first show that  the map $a\mapsto a_1,\ x\mapsto a_1^{-1}b_1$ extends to a homomorphism $G\to G_2$. We map the other elements of $G$ appearing in~(11) into $G_2$ by
$$v\mapsto a_1^{-2}b_1^2z_1^k,\quad b\mapsto b_1^{-4}z_1^l,
\quad  z\mapsto z_1,\quad u\mapsto b_1^{2^{e-1}}\quad{\rm and}
\quad w\mapsto z_1^l$$
where $z_1=a_1^{2^{e-1}}b_1^{2^{e-1}}$. We need to show that the defining relations for $G$ in (11) are satisfied when $a, x, v, b, z, u$ and $w$ are replaced with their images in $G_2$. This is a routine matter, using the properties of $G_2$ proved in Section~2, so we will simply illustrate it in a typical case, namely the relation $x^2=z^k$. For this we need to show that $(a_1^{-1}b_1)^2=z_1^k$ in $G_2$. Using Lemma~\ref{G3}(ii) and the fact that $z_1$ is in the centre of $G_2$ we have
$$(a_1^{-1}b_1)^2=a_1^{-1}(b_1a_1^{-1})b_1
=a_1^{-1}(a_1b_1^{-1}z_1^k)b_1=z_1^k,$$
as required. The other cases are similar, so the mapping extends to a homomorphism $\theta: G\to G_2$. This is an epimorphism since $a_1$ and $a_1^{-1}b_1$ generate $G_2$.

We proved in~\cite[Prop.~2.1]{DJKNS2} that $|G_2|=2^{2e}$, so $|G|\ge 2^{2e}$.The defining relations
$$v^{2^{r+1}}=b^{2^r}=[v,b]=1$$
for $G$ show that $\lg v, b\rg$ has order at most $2^{2r+1}$. The relations
$$x^2=z^k \ (=v^{2^rk}), \   v^x=v^{-1},  \ b^x=b^{-1},$$
show that $\lg v, b\rg $ is a normal subgroup of index at most $2$ in $\lg v, b, x \rg$, so the latter group has order at most $2^{2r+2}$. Finally the relations
$$a^4=v^{-2}b^{-1}z^l, \ b^a=b^{-1}, \ v^a=vb, \ x^a=xv^{-1}$$
show that $\lg v, b, x\rg$ is a normal subgroup of index at most $4$ in $\lg v, b, x, a\rg=G$, so $|G|\le 2^{2r+4}=2^{2e}$. Thus $|G|=|G_2|$, so $\theta$ is an isomorphism. \end{proof}

This confirms the assertions in~\cite{BJ, Jan} that the groups $G(k,l)$ have order $2^{2r+4}$, a fact which is not immediately apparent from the presentation~(11).

\section{Regular embeddings of $K_{n,n}$ where $n=2^e$}

A \textit{map\/} $\mathcal M$ is a cellular embedding of a connected graph $K$ in a closed orientable surface. It is \textit{(orientably) regular} if the group $\Aut(\mathcal{M})$ of all orientation-preserving automorphisms of the embedding acts regularly on the oriented edges (darts) of $K$.

It was shown in \cite[Section~2]{JNS1} that every regular
embedding $\mathcal{M}$ of a complete bipartite graph $K_{n,n}$ determines an $n$-isobicyclic triple $(G,a,b)$. Here $G$ is the subgroup
$\Aut_0(\mathcal{M})$ of index $2$ in $\Aut(\mathcal{M})$ leaving
the bipartition of $K_{n,n}$ invariant. The generators $a$ and
$b$ rotate a chosen edge $e=uv$ around its incident vertices $u$ and $v$ to the next edge, following the orientation of ther surface around $u$ or $v$. The automorphism of $G$ transposing $a$ and $b$ is induced by conjugation by the map automorphism reversing $e$. Conversely, every $n$-isobicyclic triple $(G,a,b)$ arises in this way, with $(G_1,a_1,b_1)$ and $(G,a,b)$ giving isomorphic maps if and only if there is an isomorphism $G_1\to G$ sending $a_1$ to $a$ and $b_1$ to $b$ (see~\cite{JNS1} or~\cite[Proposition 2]{JNS2}). Thus an isobicyclic group $G$ may have inequivalent pairs $a, b$ leading to non-isomorphic maps.

The following characterisation of regular embeddings of $K_{n,n}$, where $n=2^e$ and $\Aut_0(\mathcal{M})$ is non-metacyclic, was proved in~\cite{DJKNS2}. Here we give a different proof, using the structure of non-metacyclic isobicyclic $2$-groups
described in earlier sections.

\begin{theorem}\label{rl}
For each $n=2^e\geq 8$ there are exactly four non-isomorphic
regular embeddings $\mathcal{M}$ of $K_{n,n}$ for which
$\Aut_0(\mathcal{M})$ is non-metacyclic; these correspond to
the four isobicyclic triples $(G, a,b)$, where $G=G_2(e;
k,l)$ and $k,l\in\{0,1\}$. There is
exactly one regular embedding $\mathcal{M}$ of $K_{4,4}$
for which $\Aut_0(\mathcal{M})$ is non-metacyclic; this map
corresponds to the isobicyclic triple $(G, a,b)$ where
$G=G_2(2;0,0)$.
\end{theorem}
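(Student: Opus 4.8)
The plan is to transfer the question to the classification of isobicyclic triples. By \cite[Section~2]{JNS1} (see also \cite[Proposition~2]{JNS2}), isomorphism classes of regular embeddings of $K_{n,n}$ correspond bijectively to isomorphism classes of $n$-isobicyclic triples $(G,a,b)$, where $G=\Aut_0(\mathcal M)$, with two triples giving isomorphic maps exactly when some group isomorphism carries the first generating pair onto the second. The embeddings with $\Aut_0(\mathcal M)$ non-metacyclic are precisely those attached to non-metacyclic triples, so it is enough to count the non-metacyclic $n$-isobicyclic triples up to isomorphism of triples when $n=2^e$.

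First I would record the examples. By Lemma~\ref{G3}(iv) the standard generators $a,b$ of each group $G_2(e;k,l)$ form an isobicyclic pair, and by Lemma~\ref{G3}(vi) the group is non-metacyclic, so each $(G_2(e;k,l),a,b)$ is a non-metacyclic $n$-isobicyclic triple and hence yields such an embedding: the single triple $(G_2(2;0,0),a,b)$ when $e=2$, and the four triples indexed by $k,l\in\{0,1\}$ when $e\ge 3$. Conversely, Theorem~\ref{thm:nonmetachar} shows that every non-metacyclic $n$-isobicyclic triple with $n=2^e\ge 4$ is isomorphic, as a triple, to $(G_2(2;0,0),a,b)$ when $e=2$ and to one of the four triples above when $e\ge 3$. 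This already yields the upper bounds: at most four such embeddings when $n\ge 8$, at most one when $n=4$.

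The one step needing an argument beyond citation is that, for $e\ge 3$, these four triples $(G_2(e;k,l),a,b)$ are pairwise non-isomorphic as triples (the case $n=4$ has only one triple and is then immediate). For this I would apply Lemma~\ref{auto}: an isomorphism from $G_2(e;k_1,l_1)$ to $G_2(e;k,l)$ sends the standard generators to elements $a^ib^j$ and $a^fb^h$ obeying the parity constraints of that lemma. Requiring it to carry the standard pair onto the standard pair means $a^ib^j=a$ and $a^fb^h=b$; since $\lg a\rg\cap\lg b\rg=1$ this forces $i\equiv h\equiv 1$ and $j\equiv f\equiv 0\pmod n$, and then parts (i) and (ii) of Lemma~\ref{auto} give $k_1\equiv k\pmod 2$ and $l_1=l$, that is $(k_1,l_1)=(k,l)$. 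So distinct pairs $(k,l)$ give non-isomorphic triples. Hence there are exactly four regular embeddings of $K_{n,n}$ with non-metacyclic $\Aut_0$ when $n=2^e\ge 8$, and exactly one when $n=4$. (Note that, by Corollary~\ref{iso}, $G_2(e;0,1)\cong G_2(e;1,1)$ as abstract groups for $e\ge 3$; this isomorphism class of groups nevertheless carries two inequivalent isobicyclic pairs, which is why four maps arise although only three groups are involved.)
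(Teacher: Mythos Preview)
Your proof is correct and follows essentially the same route as the paper: reduce to classifying non-metacyclic isobicyclic triples via Theorem~\ref{thm:nonmetachar}, then use Lemma~\ref{auto} to separate the triples. The only difference is cosmetic: the paper first invokes Corollary~\ref{iso} to distinguish the three non-isomorphic groups $G_2(e;0,0)$, $G_2(e;1,0)$, $G_2(e;0,1)$ and then applies Lemma~\ref{auto} only to the remaining pair $G_2(e;0,1)\cong G_2(e;1,1)$, whereas you handle all four triples uniformly by plugging $i=h=1$, $j=f=0$ into Lemma~\ref{auto} and reading off $(k_1,l_1)=(k,l)$---a slight streamlining of the same argument.
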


\begin{proof}
If $e=2$ the result follows directly from
Theorem~\ref{thm:nonmetachar}(i). We may therefore assume that $e\ge 3$, so by Theorem~\ref{thm:nonmetachar}(ii) there are at most four isomorphism classes of isobicyclic triples, corresponding to the four presentations
$G_2(e;k,l)$ where $k,l\in\{0,1\}$. By Corollary~\ref{iso}, the groups
$G_2(e; 0, 0)$, $G_2(e; 1, 0)$ and $G_2(e; 0, 1)$ are mutually
non-isomorphic, and hence so are the corresponding isobicyclic triples. To complete the classification it is enough to
show that the triples corresponding to the isomorphic groups
$G_2(e;0,1)$ and $G_2(e;1,1)$ are not equivalent. If there is an isomorphism from $G_2(e;1,1)=\lg a_1\rg \lg b_1\rg
$ to $G_2(e; 0, 1)=\lg a\rg \lg b\rg$ taking $a_1$ to $a$ and
$b_1$ to $b$ then condition~(1) of Lemma~\ref{auto} gives $1=k_1\equiv k=0\pmod{2}$, a
contradiction. Hence there are four non-isomorphic maps, as
claimed.
\end{proof}

\vskip 1cm
\begin{center}
{\bf Acknowledgements}
\end{center}
\vskip -.8mm {\small The authors acknowledge the
support of  the following grants.  The first author was partially supported by grants NNSF(10971144) and BNSF(1092010).
The fourth and the fifth authors acknowledge partial support
from the grants APVV-0223-10, VEGA 1/1085/11, and from the grant
APVV-ESF-EC-0009-10 within the EUROCORES Programme EUROGIGA (project GReGAS) of the European Science Foundation.
The authors are also grateful to the organisers of workshops at Capital National University, Beijing, and the Fields Institute, Toronto, where much of this research took place.

\end{document}